\documentclass[12pt]{article}

\usepackage{amsmath,amssymb,amsthm}
\usepackage{xcolor}
\usepackage{hyperref}
\usepackage{bm}
\usepackage{graphicx}
\usepackage{subcaption}
\usepackage{url}

\newtheorem{condition}{Condition}
\newtheorem{example}{Example}
\newtheorem{theorem}{Theorem}
\newtheorem{definition}{Definition}
\newtheorem{corollary}{Corollary}

\title{Scale Dilation Dynamics in Flexible Bandwidth Needlet Constructions}

\author{Claudio Durastanti\\
	Department of Basic Science and Applied to Engineering,\\
	Sapienza University of Rome, Italy\\
	\texttt{claudio.durastanti@uniroma1.it}
}

\date{} 

\begin{document}
	
	\maketitle

\begin{abstract}
Flexible bandwidth needlets offer a versatile multiscale framework for analyzing functions on the sphere. A key element in their construction is the dilation sequence, which controls how the multipole consecutive scales are spaced and overlapped. At any resolution level, this sequence determines the center positions of the needlet weight functions and influences their localization in the spatial domain and spectral concentration properties by means of the relative bandwidth ratio. In this paper, we explore the different asymptotic regimes that arise when the dilation sequence exhibits shrinking, stable (standard), or spreading behavior. Moreover, we assume the dilation sequence grows regularly enough to ensure well-defined asymptotic properties. For each regime, we characterize the impact on the geometry of the center scales and the shape of the multipole windows, with particular attention to their overlap structure and spectral coverage. These insights help to clarify the trade-offs between localization, redundancy, and scalability in the design of needlet-type systems, particularly in relation to the study of the asymptotic uncorrelation of needlet coefficients when applied to random fields.\\
\textbf{Keywords:} Flexible bandwidth needlets, Multiscale analysis, Dilation factor dynamics, Central scale growth, Needlet frame construction, Localization properties, Asymptotic uncorrelation.\\
\textbf{MSC2020:} 42C40, 60G60
\end{abstract}


%
\section{Introduction}

Spherical needlets provide a powerful framework for the analysis of data on the sphere, particularly when observations are localized to subregions of the sphere. They are spherical wavelets constructed to achieve simultaneous localization in both real (spatial) and harmonic (frequency) domains, and they yield quasi-uncorrelated coefficients. Defined in their standard dyadic form in \cite{npw1,npw2}, needlets have been extensively employed in probability theory, mathematical statistics, and cosmological data analysis, where their localization and quasi-uncorrelated structure offer significant analytical advantages. Notable applications include: characterizations of Gaussian random fields \cite{bkmpAoS,cammar,shetod}, asymptotic behavior of statistics in Poisson fields \cite{boudur1}, nonparametric estimation theory \cite{bkmpAoSb,dur2}, Whittle likelihood inference on the sphere \cite{dlmejs}, point-source detection \cite{duque}, and statistical testing for isotropy and non-Gaussianity in the cosmic microwave background (CMB) \cite{LeGia}. In CMB analysis, they are especially effective for component separation \cite{zz,carones}, owing to their capacity to adapt across scales and regions.\\
Subsequent generalizations have broadened the scope of needlet constructions; see, for instance, \cite{needlib,gm2}. In particular, the recent framework introduced in \cite{dmt24} replaces the classical dyadic scaling with an arbitrary sequence of spectral scales (or centers) $\lbrace S_j \ :j \geq 1\rbrace$, indexed by the resolution level $j \in \mathbb{N}$ and controlling the harmonic support, so that each needlet is associated with the frequency band $\left[S_{j-1}, S_{j+1}\right]$. This allows for greater flexibility in tuning localization and scale resolution, a feature especially beneficial for analyzing data with variable resolution or anisotropic noise across the sphere. 
In the standard case, indeed, the same weight - or window - function is rescaled at each resolution level $j$, so the spectral shape remains identical across all levels. In contrast, flexible needlets allows this trade-off to evolve with $j$, letting the shape and extent of spectral localization to adapt as resolution increases. More in detail, for each $j$ the \emph{relative bandwidth ratio} as 
\begin{equation*}
\Delta_j=\frac{S_{j+1}-S_{j-1}}{S_j}
\end{equation*}
measures the proportion between the bandwidth of the spectral window and its center $S_j$, that is, how many multipoles are taken by the needlets with respect to their scale. In the standard case, $\Delta_j$ is a positive constant. In contrast, in the flexible bandwidth setting, $\Delta_j$ may tend to zero or diverge, allowing for finer control over the tradeoff between localization in harmonic space and on the sphere and then leading to the shrinking or spreading needlets.
This enhanced capability to tailor properly the concentration properties to any specific problem can be of help in several applications. From a more theoretical standpoint, particularly in adaptive nonparametric estimation on the sphere, the goal is to estimate an unknown regression function $f$ whose smoothness is not known in advance. Standard needlet constructions use a fixed bandwidth ratio across scales, which limits the ability to adapt to local variations in the function's regularity (see \cite{dur}). In contrast, flexible bandwidth needlets allow the bandwidth to vary with the resolution level $j$: it can be narrowed in certain multipole ranges to capture rapid oscillations, typically high-frequency features, and widened elsewhere to smooth out noise. This adaptability should enable the procedure to achieve nearly minimax convergence rates over a broad range of Sobolev or Besov spaces, without requiring prior knowledge of the underlying smoothness. This approach can be readily extended to the estimation of derivatives of arbitrary order and to a broad class of compact manifolds beyond the sphere (see for example \cite{durtur23}).
 Similarly, in proving central limit theorems for nonlinear functionals (e.g., power-spectrum or bispectrum‐based statistics) of a random field, reducing overlap between high-$\ell$ windows via narrower bands simplifies cumulant bounds and yields stronger rates of convergence for the asymptotic normality of needlet coefficients.
In practical CMB analysis, flexible bandwidth needlets improve component separation and feature detection by matching each filter to the observed data’s scale‐dependent characteristics. For example, ground‐based experiments like SPT (South Pole Telescope) or SO (Simons Observatory) exhibit noise and beam responses that vary strongly with $\ell$: by shrinking the bandwidth where noise dominates (e.g., $\ell >4000$, and widening it where the beam is sharp (e.g.,  $\ell \sim 8000-10000$), one can suppress noisy multipoles and preserve small‐scale features such as Sunyaev–Zel’dovich clusters (see, for example \cite{bleem}). This flexibility yields cleaner separation of astrophysical components and more robust detection of scale‐dependent signals than fixed‐bandwidth constructions.\\

In the original work \cite{dmt24}, the authors characterize the flexible bandwidth needlet projectors $\lbrace \Psi_{j}\left(\cdot,\cdot\right) : j \geq 1 \rbrace$ in terms of their window function sequences $\lbrace b_{j}\left(\cdot\right): j \geq 1\rbrace$, developing concentration bounds under suitable conditions on the scale sequence $\lbrace S_j : j \geq 1\rbrace$. In particular, we are referring to the spatial localization property:  for any $x,y \in \mathbb{S}^2$, $M\in \mathbb{N}$, 
\begin{equation*}
	\begin{split}
	\left \vert \Psi_{j} \left(x,y\right) \right \vert& \leq C_M \left(S_{j+1}^2-S_{j-1}^{2}\right)\\ &\times\max\left(\frac{1}{\left(S_{j-1}d_{\mathbb{S}^2}(x,y)\right)^{2M}},\frac{1}{\left(\left(S_{j}-S_{j-1}\right)d_{\mathbb{S}^2}(x,y)\right)^{2M}}\right),
	\end{split}
\end{equation*}
where $C_M>0$ and $d_{\mathbb{S}^2}(x,y)$ is the great circle distance between $x$ and $y$.
With regard to applications, the authors introduce a novel goodness-of-fit test based on the squared flexible bandwidth needlet coefficients computed over a spherical subdomain. They establish a quantitative central limit theorem for the test statistic using the Malliavin–Stein method. The validity of the test relies on two key conditions: a vanishing bandwidth regime ($\Delta_j\to0$) and a sufficiently strong one-step separation between scales ($S_j-S_{j-1}\gtrsim S_j^{1-\beta}$). Here, $\beta \in \left[ \left. 0,1\right)\right.$ is a smoothness parameter that governs the decay of the spectral modulation of the variance function, ensuring that correlations between coefficients decay rapidly enough to enable reliable statistical inference.\\

In this work, we adopt a different viewpoint. First,  we properly classify the possible bandwidth regimes, depending quantitatively on the growth of the scaling sequences $\lbrace S_j: j\geq 1\rbrace$. Then, we describe them through sequences of bandwidth dilation factors $\lbrace h_j:j \geq 1\rbrace$ built over sequences of shifts that exhibit regularly varying growth, characterized by a regularity exponent $p$ and other growth parameters. 
In other words, the ratio $h_j=\frac{S_{j+1}}{S_{j}}$ directly controls how much the  multipole center increases from one level to the next. We fix $S_0 = 1$, and the center sequence satisfies 
\begin{equation*}
	S_j = \prod_{k=0}^{j-1} h_k = \exp\left(\frac{\gamma(j)j^{p+1}}{p+1}\right),
\end{equation*}
where$p\in \mathbb{R}$ is the regularity index and $\gamma: \mathbb{R}^+ \to \mathbb{R}^+$ is slowly varying in the sense of Karamata (see \cite{bingham}), that is, for any $\tau>0$,
\[
\underset{j \rightarrow \infty}{\lim} \frac{\gamma(\tau j)}{\gamma(j)}=1.
\]
The parameter $p$ and the function $\gamma$ governing this growth are instrumental in deriving quantitative bounds on both spectral and spatial localization of the associated functions. Furthermore, they play a central role in estimating functional norms and establishing decorrelation properties of the needlet coefficients across scales. \\

By tuning the regularity exponent $p$, we can flexibly interpolate between sparse and dense multiscale decompositions, which is crucial for balancing localization and frequency resolution.
Indeed, once we fixed $S_j$, after defining a scaling functions properly recomputed for each scale,  we construct, for any $j \geq 1$, a family of flexible-bandwidth needlets 
\begin{equation*}
	\lbrace \psi_{j,k} : k= 1 \ldots K_j\rbrace,
\end{equation*}
where $K_j$ is the total number of cubature points at level $j$ and depends explicitly on the choice of the scaling sequence $\lbrace S_j: j \geq 1 \rbrace$. 
Each index $k$ corresponds to a point $\xi_{j,k}$ on the sphere. They are typically chosen as part of a quadrature rule for $\Psi_{j}$, and each needlet $\psi_{j,k}$ is naturally associated with a spherical subregion, often called a pixel, centered at $\xi_{j,k}$.
This construction links the spectral localization of needlets to spatial regions on the sphere, allowing for localized analysis both in frequency and position.
 As in \cite{dmt24}, we will describe in what follows three distinct regimes for the sequence $\lbrace S_j:j \geq 1 \rbrace$, each determined by the asymptotic behavior of the product of the dilations $\lbrace h_j : j \geq 1 \rbrace$:
\begin{enumerate}
	\item \emph{Shrinking regime.} The scale sequence $\lbrace S_j : j \geq1 \rbrace$ grows subexponentially in 
	$j$; equivalently, the bandwidth ratio $\lbrace \Delta_j: j \geq 1 \rbrace$ shrinks to 0 while the dilation factors $\lbrace h_j: j \geq 1 \rbrace$	tends to 1 from above fast enough that 
	$\log S_j = o\left(j\right)$. Intuitively, the bands become finer and finer at a rate slower than a fixed exponential.
	\[
	\underset{j \rightarrow \infty}{\lim} \frac{S_j}{j}=0; \quad 	\underset{j \rightarrow \infty}{\lim} \Delta_j =0; \quad	\underset{j \rightarrow \infty}{\lim} h_j =0.
	\]
	\item \emph{Stable (standard) regime.} The bandwidth ratio $\lbrace \Delta_j: j \geq 1 \rbrace$ and the dilation factors $\lbrace h_j : j \geq 1 \rbrace$ are constant (e.g., 
	$h_j=B$, with $B>1$, so that $S_j = B^j$ grows exactly exponentially in $j$. This recovers the classical fixed‐bandwidth needlet construction, where each spectral window is a fixed dilation of a single prototype.
		\[
	\underset{j \rightarrow \infty}{\lim} \frac{S_j}{j}=c>0; \quad 	\underset{j \rightarrow \infty}{\lim} \Delta_j =c^\prime>0; \quad	\underset{j \rightarrow \infty}{\lim} h_j =c^{\prime \prime}>0.
	\]
	\item \emph{Spreading regime.} Here $\lbrace S_j :j  \geq 1 \rbrace$ grows superexponentially in $j$, that is, $\log S_j$ grows faster than linearly. Equivalently, $\Delta_j$ and $h_j$ themselves grow without bound, so bands spread apart at increasing scales.	
		\[
	\underset{j \rightarrow \infty}{\lim} \frac{S_j}{j}=\infty; \quad 	\underset{j \rightarrow \infty}{\lim} \Delta_j =\infty; \quad	\underset{j \rightarrow \infty}{\lim} h_j =\infty.
	\]
\end{enumerate}
We will also provide a more detailed bound for the spatial localization. Indeed, in our flexible‐bandwidth setting, fixed $M \in \mathbb{N}$, we will show that there exists a constant $c_M>0$
such that the needlet $\psi_{j,k}\left(x\right)$, for $x \in \mathbb{S}^2$, satisfies
\begin{equation*}
	\left \vert \psi_{j,k}(x) \right \vert \leq c_M\left(\frac{S^2_{j+1}}{S_{j-1}}-S_{j-1}\right)\times
	\begin{cases}
		\frac{1}{\left(S_{j-1}d_{\mathbb{S}^2} \left(x,\xi_{j,k}\right)\right)^{2M}} & \text{shrinking case}\\
		\frac{1}{\left(\left(S_{j}-S_{j-1}\right)d_{\mathbb{S}^2} \left(x,\xi_{j,k}\right)\right)^{2M}} & \text{spreading case}\\
	\end{cases}.
\end{equation*}
Moreover, within each of the three main regimes, we further link each case with the regularity exponent, according to the detailed behavior of the sequence $\lbrace h_j: j \geq 1\rbrace$.\\
In particular, when $\lbrace S_j : j \geq 1 \rbrace$ grows subexponentially (the shrinking regime), the resulting needlets exhibit extremely tight spatial localization on the sphere, making them especially effective for goodness‐of‐fit tests of isotropy or Gaussianity. Because the needlet coefficients at nearby cubature points become nearly uncorrelated at high $j$ one can derive precise asymptotic distributions (e.g., central limit theorems) for quadratic forms or bispectrum‐based statistics. Subexponential growth ensures that the overlap between bands is sufficiently small to make these asymptotics accurate. Also, under the hypothesis that the underlying angular power spectrum $C_\ell$ is assumed to have a certain degree of smoothness $\beta$, we will define precise criteria to choose a subexponential sequence $\lbrace S_j : j \geq 1 \rbrace$ so that each needlet band probes a narrow range of multipoles. This reveals deviations from smooth behavior—such as oscillations or localized features in $C_\ell$ more effectively than a fixed exponential construction would.

\subsubsection{Plan of the paper}
Section \ref{sec:back} introduces relevant background results on harmonic analysis and spherical random fields. Section \ref{sec:fbnneed} recalls flexible bandwidth needlet projectors, their properties and their discretized version. Section \ref{sec:class} studies the relation between the scaling sequences related to the flexible bandwith needlets and the different regimes associated to their growth. Section \ref{sec:dila} studies explicit constructions for dilation sequences in the shrinking regime, while Section \ref{sec:corr} studies the high-frequency uncorrelation properties of needlet coefficients in this regime. Finally, Section \ref{sec:proofs} collects all the proofs

\section{Preliminary results}\label{sec:back}
In this section, we present the some background results necessary for the development of our main findings. We begin by reviewing essential concepts from harmonic analysis on the sphere and the theory of spherical random fields, with a focus on their spectral decomposition and the role of the angular power spectrum in characterizing statistical properties. This provides the foundation for analyzing spatially localized features in spherical data through frequency-domain techniques, see for example \cite{MaPeCUP}.  

\subsection{Harmonic analysis and random fields on the sphere}
Before introducing needlets, we briefly review some standard background material on harmonic analysis on the sphere. For further discussion and comprehensive details, we refer the reader to \cite{atki,MaPeCUP} and the references therein.
Let $x=\left(\vartheta,\varphi\right)\in\mathbb{S}^2$ be a location on the sphere; $\vartheta \in \left(0,\pi\right)$ is the colatitude,  the angle from the positive 
$z$-axis, while $\varphi\in\left[\left.0,2\pi\right)\right.$ is the longitude, the angle from the positive $x$-axis in the $xy$-plane. Let us denote the space of square-integrable functions with respect to Lebesgue measure on the sphere $dx$ by $L^{2}\left( \mathbb{S}^{2}\right) =L^{2}\left(\mathbb{S}^{2},dx\right) $. The following decomposition holds:%
\begin{equation*}
	L^{2}\left( \mathbb{S}^{2}\right) =\bigoplus_{\ell =0}^{\infty }
	\mathcal{H}_{\ell } ,
\end{equation*}%
where $\mathcal{H}_{\ell}$ is the $\left(2\ell+1\right)$-dimensional space of harmonic and homogeneous polynomials of degree $\ell $ defined on $\mathbb{R}^{3}$ and restricted to $\mathbb{S}^{2}$ (see again \cite{atki}). For any $\ell \geq 0 $, the set $\lbrace  Y_{\ell ,m}:m=-\ell,\ldots
,\ell \rbrace$ 
describes an orthonormal basis on $\mathcal{H}_{\ell}$.  Consequently, any square-integrable function $f\in L^{2}\left( \mathbb{S}%
^{2}\right) $ admits the following harmonic expansion
\begin{equation*}
	f\left( x\right) =\sum_{\ell \geq 0}\sum_{m=-\ell}^{\ell}a_{\ell
		,m}Y_{\ell ,m}\left( x\right) ,  \label{eq:exp}
\end{equation*}%
where, for $\ell \geq 0$ and $m=-\ell,\ldots ,\ell$,
\begin{equation*}
	a_{\ell ,m}=\int_{\mathbb{S}^{d}}\overline{Y}_{\ell ,m}\left( x\right)
	f\left( x\right) dx\in \mathbb{C},
\end{equation*}%
are the so-called spherical harmonic coefficients. 
Spherical harmonics can be also read as the  eigenfunctions of the spherical Laplace-Beltrami operator with eigenvalues $-\ell(\ell+1)$, such that%
\begin{equation*}
	\Delta _{\mathbb{S}^{2}}Y_{\ell,m }=-\ell (\ell +1)Y_{\ell,m }, \quad \ell
	=0,1,2,\ldots.
\end{equation*}
Also, the following \emph{addition formula} holds%
\begin{eqnarray*}
	Z_{\ell }\left( x_{1},x_{2}\right) &=&\sum_{m=-\ell}^{\ell}\overline{Y}%
	_{\ell ,m}\left( x_{1}\right) Y_{\ell ,m}\left( x_{2}\right)  \notag \\
	&=&\frac{2\ell +1}{4\pi} P_{\ell }\left( \left\langle x_{1},x_{2}\right\rangle \right) ,\quad
	\text{for }x_{1},x_{2}\in \mathbb{S}^{d},  \label{eq:harmproj}
\end{eqnarray*}%
where $\left\langle \cdot ,\cdot \right\rangle $ is the standard scalar
product over $\mathbb{R}^{3}$, $P_{\ell }$ is
the Legendre polynomial of degree $\ell $,  
\begin{equation*}
	P_{\ell }\left( \cdot\right) :[-1,1]\rightarrow \mathbb{R}, \quad P_{\ell
	}\left( t\right) :=\frac{d^{\ell }}{dt^{\ell }}(t^{2}-1)^{\ell }, \quad %
	\ell =0,1,2,...,
\end{equation*}%
cf., for example, \cite{atki}, Chapter 2.
The following \emph{reproducing kernel property} holds
\begin{equation*}
	\int_{\mathbb{S}^{2}}Z_{\ell}\left( \langle x,y\rangle \right) Z_{\ell
		^{\prime }}\left( \langle y,z\rangle \right) dy=Z_{\ell }\left( \langle
	x,z\rangle \right) \delta _{\ell ^{\prime }}^{\ell },  \label{eq:selfreprod}
\end{equation*}%
where $\delta _{\cdot }^{\cdot }$ is the Kronecker delta.
Additionally, $Z_{\ell }$ is the projector over the space $%
\mathcal{H}_{\ell}$, while  for any $%
f\in L^{2}\left( \mathbb{S}^{2}\right) $ its projection over the space $%
\mathcal{H}_{\ell }$ is given by
\begin{equation*}
	f_{\ell }\left( x\right) =Z_{\ell}\left[ f\right] \left( x\right) =\int_{%
		\mathbb{S}^{2}}Z_{\ell}\left( x,y \right) f\left( y\right)
	d y=\sum_{m=-\ell}^{\ell}a_{\ell ,m}Y_{\ell ,m}\left( x\right) .
	\label{eq:proj}
\end{equation*}
We now introduce the concept of spherical random fields, which form the foundational objects of interest in many applications involving data on the sphere. A spherical random field is a stochastic process defined on $\mathbb{S}^2$, assigning a random variable to each point on the sphere.  Any square-integrable spherical random field can be expanded into an infinite series of spherical harmonics, 
\begin{equation}\label{eqn:randomfield}
	T\left(x\right)  = \sum_{\ell \geq 0} \sum_{m=-\ell}^{\ell} a_{\ell,m} Y_{\ell,m}\left(x\right).
\end{equation}	
The random spherical harmonic coefficients $\lbrace a_{\ell,m}:\ell \geq 1, m=-\ell,\ldots \ell\rbrace$ of this expansion capture the contribution of each frequency multipole moment to the field. We assume that the random field is centered and isotropic, that is, invariant in distribution with respect to rotations of the coordinate system. Within this decomposition, the statistical properties of the fields such as isotropy are described by the behavior of its angular power spectrum, given by 
\begin{equation*}
	\mathbb{E}\left[a_{\ell,m} \bar{a}_{\ell^{\prime},m^{\prime}}\right] = C_{\ell} \delta_{\ell}^{\ell^\prime}\delta_{m}^{m^\prime},
\end{equation*}
which encodes the variance of the harmonic coefficients at each multipole $\ell$. Harmonic analysis thus provides a natural and powerful framework for understanding the structure and regularity of spherical random fields. 
In what follows we assume that the angular power spectrum satisfies the following regularity condition.
\begin{condition} [Regularity condition on the angular power spectrum]
\label{cond:Cl}
The angular power spectrum takes the form
\begin{equation}\label{eqn:regularity}
	C_{\ell} = G \left(\ell\right) \ell^{-\alpha}, 
\end{equation}
where $\alpha \geq 2$ is the spectral index, while the modulation $G\in C^{\infty}\left(\mathbb{R}_{+}\right)$ is a positive, smooth function such that
\begin{equation*}
	G_{\min}\leq G(u)<G_{\max}\quad \text{ for some }G_{\max}\geq G_{\min}>0,
\end{equation*} and  its $r$-th derivative satisfies 
\begin{equation}\label{eqn:regularity2}
	g^{(r)}(u) = O\left(u^{-\left(1-\beta\right)r} \right) \text{ as }u \rightarrow \infty
\end{equation}
\end{condition}
As usual, he spectral index $\alpha>2$ governs the asymptotic decay rate, ensuring sufficient regularity of the associated field. The function $G\in C^{\infty}\left(\mathbb{R}^+\right)$ introduces a smooth modulation of the power law, allowing for localized fluctuations or corrections. Boundedness guarantees that the overall profile of the spectrum does not exhibit unphysical spikes or dips, while imposing the condition \eqref{eqn:regularity2} reflects a Gevrey-type regularity, ensuring that while $G$ is infinitely differentiable, it does not oscillate too wildly at high frequencies—thereby maintaining control over the fine-scale structure of the spectrum. The parameter $\beta$ acts as a \textit{spectral flatness index}: smaller values of $\beta$ correspond to faster decay of derivatives, i.e., smoother modulation. For $\beta = 0$, we are in a strong smoothness regime, whereas as $\beta \rightarrow 1$ the modulation $G$ can display more variability, especially in higher derivatives.
An example covered by this structure and suggested in \cite{dmt24} considers a modulation as
\begin{equation*}
	G(\ell) = \sum_{p=1}^{P} c_p \left(d_p+ \sin\left(\frac{\ell^{\beta_p}}{M_p}\right)\right),
	\end{equation*}
	where $d_p>1$, $c_p,M_p>0$, $\beta_p \in (0,1)$  for $p=1,\ldots,P$.\\
This form provides a flexible yet analytically tractable framework for modeling a broad class of phenomena where both scale-dependent decay and smooth modulation are physically or statistically justified.

\section{The construction of flexible bandwidth needlets}\label{sec:fbnneed}
In this section we introduce the framework of flexible bandwidth needlets, a class of spherical wavelets constructed to offer a tunable balance between spatial localization and frequency resolution. This construction extends classical needlet systems by allowing the bandwidth parameter to vary with the frequency level, enabling greater adaptability in modeling and analyzing data with nonuniform spectral characteristics. These tools will be central to our subsequent investigation of high-frequency uncorrelation and asymptotic behavior of needlet coefficients. More details on the construction of this needlet system can be found in \cite{dmt24}.
\subsection{Flexible needlet projectors and weight functions}
The flexible bandwidth needlet kernel, as introduced by \cite{dmt24}, can be
defined as a weighted sum of harmonic projector: for any $j=1,2,...$%
\begin{equation*}
	\Psi _{j}\left( x,y\right) 
	=\sum_{\ell \geq
		0}b^2_j\left( \ell\right) Z_{\ell }\left( \left\langle
	x,y\right\rangle \right) \text{ ,}
\end{equation*}%
where 
$\lbrace b_j(\cdot):j\geq 1 \rbrace$ is sequence of a weight functions $ b_j(\cdot):\mathbb{R}%
\rightarrow \mathbb{R}$. 
According to \cite{dmt24},  we introduce the \emph{window function support} by means of the
\textit{scales or centers of the support windows.} Let $\left\{ S_{j}: j \geq 1\right\}$ denote a strictly increasing sequence of positive real numbers, defining the \emph{scales}, or \emph{centers} of the support window such that for any $j \geq 1$ the weight function $b_j\left(\cdot\right)$ is non-zero only on $\left[S_{j-1},S_{j+1}\right]$. \\
For each $j \geq 1$, each weight function must then satisfy three
properties:
\begin{enumerate} 
	\item \emph{Compact support in the frequency domain.} The weight function $b_j$ is compactly supported in $[S_{j-1},S_{j+1}];$ Also, $b_{j}\left(S_{j-1}\right) = b_{j}\left(S_{j+1}\right) = 0$, while $b_j\left(S_j\right)=1$. 

 \item \emph{Derivability.} Each $b_j$ is $C^{\infty }\left(\left[S_{j-1},S_{j+1}\right]\right)$; also it holds that 
\begin{equation*}
	\left \vert b_j^{(n)} (u) \right \vert \leq \frac{C_n}{\left(S_j-S_{j-1}\right)^n},
\end{equation*}
where $c_n>0$ and $b_j^{(n)}$ is the $n$-th derivative of $b_j$
 
\item  \emph{Partition of Unity property.} For all $\ell \geq 1$, it holds that
\[\sum_{j}b_j^{2}(\ell)=1.\]
\end{enumerate}
Each function $b_j$ is strictly localized in frequency around the central value $S_j$, and its support is limited to the interval (named window support)$\left[S_{j-1},S_{j+1}\right]$, ensuring that $b_j$ is null outside this range. This is key for maintaining semi-orthogonality of the system, typical of standard needlets: specifically, the supports of two elements of the weight function sequences $b_{j}(\cdot)$ and $b_{j^{\prime }}(\cdot)$ are disjoint whenever $\left\vert j-j^{\prime }\right\vert \geq 2$. \\
The normalization condition $b_j\left(S_j\right)=1$ guarantees that the central frequency in the band $j$ is fully preserved. This localization in the frequency domain enables multiscale analysis: each $j$-level captures information primarily from a narrow frequency band centered at $S_j$.\\
The infinitely differentiability of the function $b_j$ ensures that ineedlets are smooth and spatially well localized.
The bound on derivatives reflects the bandwidth width: as the window support  
$\left[ S_{j-1},S_{j+1}\right]$ becomes narrower, $b_j$ must transition more sharply from 0 to 1 and back to 0, leading to larger derivatives. Thus, the derivative bound ensures control over the smoothness of the corresponding needlet frame elements, which is crucial for approximation properties and for proving our central limit theorems.
This trade-off between frequency and spatial localization is intrinsic to wavelet constructions.\\
The partition of unity property guarantees that the needlet system covers the frequency axis completely and without redundancy, enabling stable decomposition and reconstruction of any function belonging to $L^2\left(\mathbb{S}^2\right)$.
More in detail, Under these conditions, in \cite{dmt24} the following space
localization property is established; for all $(x,y)\in \mathbb{S}^{2}$ and
for all integers $M$, there exist a constant $C_{M}$ (depending on $b_j(\cdot),$
but not on $x,y$ or $j$) such that%
\begin{equation}
	\begin{split}
	\left\vert \Psi _{j}\left( x,y\right) \right\vert &\leq C_{M}  \left(S_{j+1}^2- S_{j+1}^2\right)\\& \max\left(\frac{1}{\left( 1+S_{j-1}\Theta(x,y)\right)^{2M}} ,  \frac{1}{\left( 1+\left(S_{j}-S_{j-1}\right)\Theta(x,y)\right)^{2M}}\right), \label{eq:StanLocProp}
	\end{split}
\end{equation}
where $\Theta (x,y)=\arccos (\left\langle x,y\right\rangle )$ denotes the
standard geodesic distance on the unit sphere. 

We now present some results on the discretization of the flexible bandwidth needlets continuous transform discussed above. A general spherical cubature formula, providing excellent bounds on the cubature weights, was established in \cite{npw1}. 
\subsection{Discrete Construction of Flexible Bandwidth Needlets}
In this subsection, we provide an explicit discrete construction of flexible bandwidth needlets, extending the classical needlet framework by allowing scale-dependent frequency localization through a non-geometric sequence of bandwidths.  These results have been effectively applied in the context of spherical Voronoi cells, as shown in \cite[Proposition 1]{bkmpAoS} (see also \cite{bkmpAoSb}). In what follows, we will simply  adapt the findings of \cite{npw1,npw2} to our setting (see also \cite{gm2,dmt24}).\\
Let us consider the  \emph{center sequence} $\lbrace S_{j}: j\in \mathbb{N}_{0}\rbrace $, such that $S_{0}=1$,  and $S_{j+1}>S_{j}$. 
We fix $S_{-1}=0$, so that we can define a sequence of scaling functions $a_{j}:\mathbb{%
	R^{+}}\rightarrow \left[ 0,1\right] $ such that for $j \geq 1$
\begin{equation*}
	a_{j}\in C^{\infty }\left( \mathbb{R^{+}}\right) \text{ },\text{ }%
	a_{j}\left( u\right) =1\text{ for }\left\vert u\right\vert \leq S_{j-1},
\end{equation*}%
(so that $a_{0}\left( 0\right) =1)$, and
\begin{equation*}
	0<a_{j}\left( u\right) \leq 1\text{ for }u\in \left[ 0,S_{j}\right] \text{ }.
\end{equation*}
For any $j\geq$, the weight function $b_j$ is obtained by the formula 
\begin{equation}
	b_{j}\left( u\right) =\sqrt{a_{j+1}\left( u\right) -a_{j}\left( u\right) }.
	\label{eq:bjfun}
\end{equation}%
In what follows, we recall a construction of the weight functions $b_j$ as suggested by \cite{dmt24}, which relies on a smooth interpolation between frequency bands. Two equivalent and complementary strategies are available. The first defines a set of scaling functions $a_j(u)$ explicitly via a compactly supported $C^{\infty}$-mollifier $\phi_1$, followed by a scaled transition using its distribution function $\phi_2$. This yields a direct formula involving a piecewise definition adapted to the support interval $\left[S_{j-1},S_{j+1}\right]$. Alternatively, one may define a universal template function 
$a(u)$, with supported on $[0,2]$, and apply a linear rescaling $\tau_j(u)$ to align it with the desired interval. This strategy separates the smoothing profile from the scaling behavior, offering greater modularity and numerical efficiency. Both approaches are equivalent in the resulting $b_j$, defined by the difference \eqref{eq:bjfun}, ensuring the required partition of unity properties in the spectral domain. The numerical scheme is also largely analogous to the construction proposed in \cite{bkmpAoS} for standard needlets.
\begin{example} [Explicit construction of $b_{j}$]
	\begin{enumerate}
\item \emph{Step 1.} Define the $C^{\infty }$ mollifier compactly supported in $%
		\left[ -1,1\right] $:
		\begin{equation*}
			\phi _{1}(t)=%
			\begin{cases}
				\exp \left( -\frac{1}{1-t^{2}}\right) & \mbox{ for }t\in \lbrack -1,1] \\
				0 & \mbox{ otherwise }%
			\end{cases}%
			.
		\end{equation*}
\item \emph{Step 2.} Define the corresponding distribution function
		\begin{equation*}
			\phi _{2}\left( x\right) = 
			\begin{cases}
				0 & \mbox{ for }x\leq -1\\
				\frac{\int_{-1}^{x}\phi _{1}\left( t\right) dt}{%
					\int_{-1}^{1}\phi _{1}\left( t\right) dt} & \mbox{ for } x\in (-1,1)\mathbb{\ }\\
				=1 & \mbox{ for }x\geq 1
			\end{cases}.
		\end{equation*}%
		Observe that
		\begin{equation*}
			\int_{-1}^{1}\phi _{1}\left( t\right) dt=\int_{-1}^{1}\exp \left( -\frac{1}{%
				1-t^{2}}\right) dt\simeq 0.444\text{ .}
		\end{equation*}
		
\item \emph{Step 3.} Analogously to \cite{bkmpAoS}, we build $a_{j}$ from $\phi _{2}$ by a change of variable:
		\begin{equation*}
			a_{j}(u)=%
			\begin{cases}
				1 & \text{ for }u\in \left[ 0,S_{j-1}\right] \\
				\phi _{2}\left( \frac{\left( S_{j}+S_{j-1}-2u\right) }{\left(
					S_{j}-S_{j-1}\right) }\right) & \text{ for }u\in \left( \left. S_{j-1},S_{j}%
				\right] \right. \\
				0 & \text{ for }u\in \left[ \left. S_{j},\infty \right) \right.%
			\end{cases}	.
		\end{equation*}%
		An analogous construction is obtained by defining  
		\begin{equation*}
			a(x)=%
			\begin{cases}
				1 & \text{ for }x\in \left[ 0,1\right] \\
				1-\phi _{2}\left( x\right) & \text{ for }x\in \left( \left. 1,2\right]
				\right. \\
				0 & \text{ for }x\in \left[ \left. 2,\infty \right) \right.%
			\end{cases}	.
		\end{equation*}%
		and thento apply the linear scaling 
		\begin{equation*}
			\tau _{j}(u):=\frac{\left(
				2u-S_{j}-S_{j-1}\right) }{\left( S_{j}-S_{j-1}\right) },
		\end{equation*}%
		such that
				\begin{equation*}
			a_{j}(u)=a(\tau _{j}(u)).
		\end{equation*}	
		
	\item \emph{Step 4} Finally, 
		\begin{equation*}
			b_{j}^{2}\left( u\right) =a_{j+1}\left( u\right) -a_{j}\left( u\right) .
		\end{equation*}
	Figure \ref{fig:example} presents a graphical example of the scaling and weight functions at selected resolution levels.
	\end{enumerate}
	\begin{figure}[htbp]
		\centering
		\includegraphics[width=0.48\textwidth]{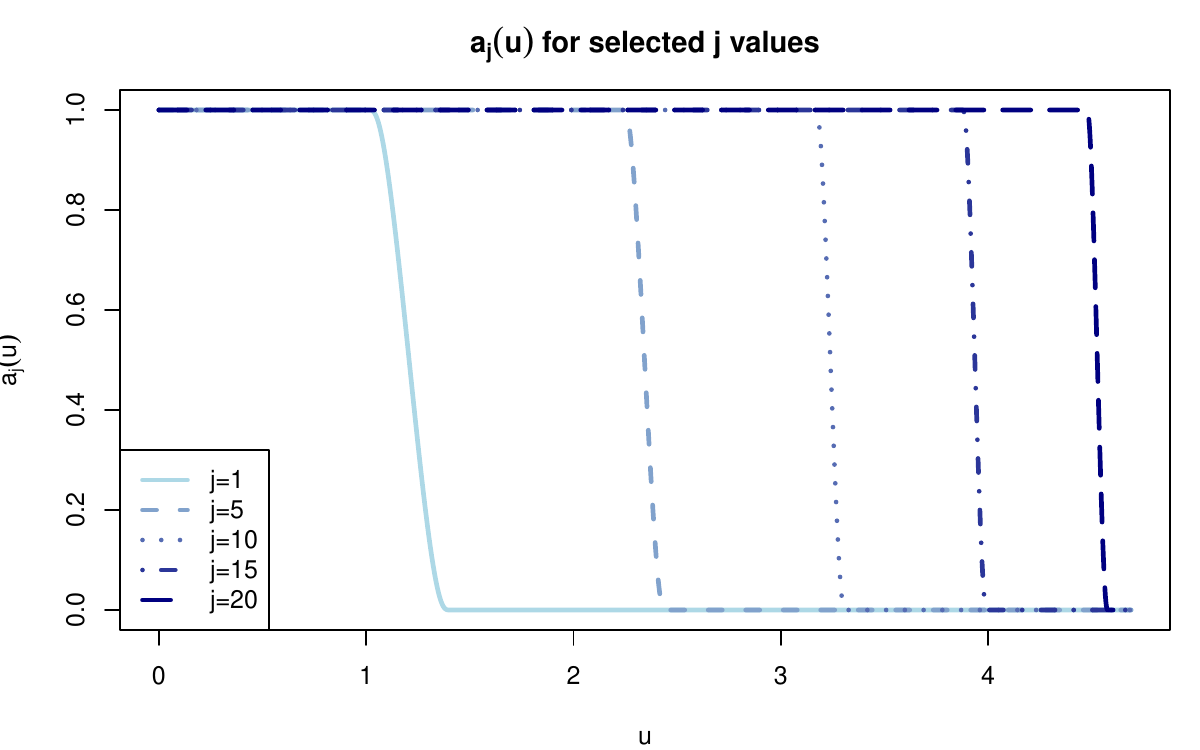}
		\hfill
		\includegraphics[width=0.48\textwidth]{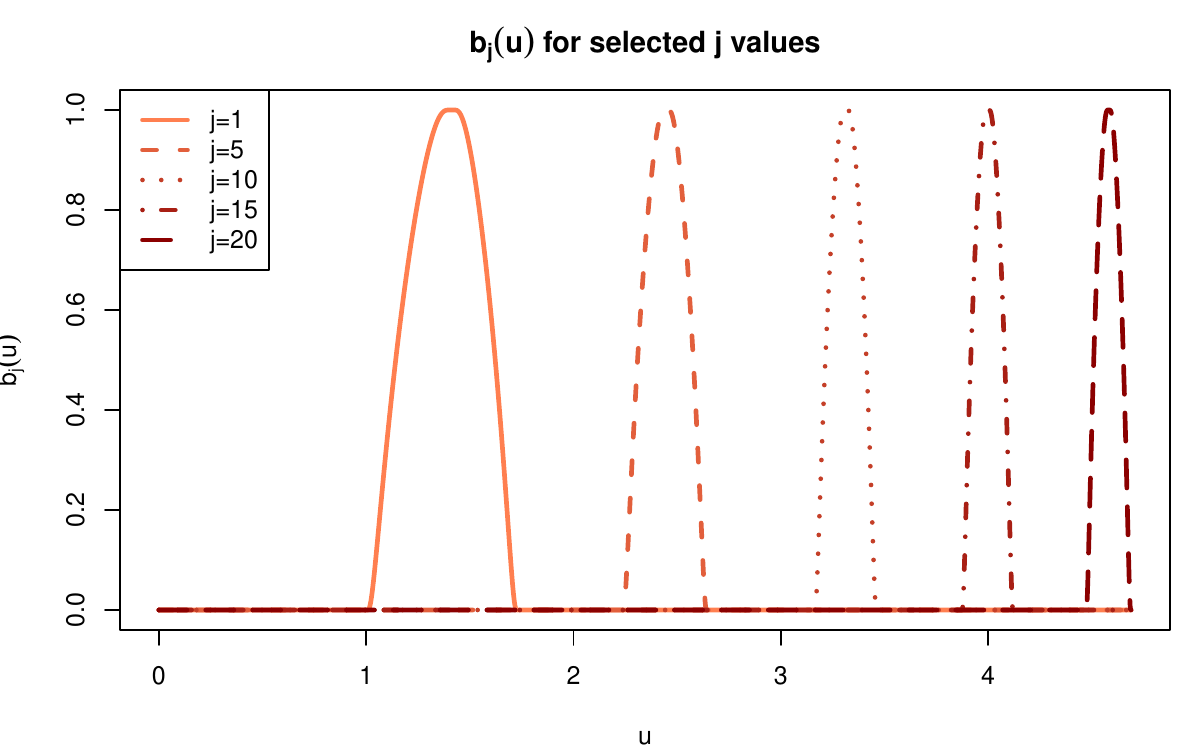}
		\caption{\textbf{Flexible bandwidth scaling and weight functions.} 
			The left panel displays some examples of scaling functions $a_j(u)$, which form a smooth partition of unity constructed via rescaled mollifiers. 
			Each $a_j$ transitions smoothly from $1$ to $0$ over the dyadic interval $[S_{j-1}, S_j]$, where $S_0 = 1$ and $S_{j+1} = (1 + 1/j)^\alpha S_j$ for a fixed $\alpha > 0$.
			The right panel shows the associated functions $b_j(u)$, defined by $b_j^2(u) = a_{j+1}(u) - a_j(u)$, which quantify the smooth difference between adjacent $a_j$’s and will be used to construct smooth needlet projectors.}
		\label{fig:example}
	\end{figure}
\end{example}

For any $x\in \mathbb{S}^{2}$, the flexible bandwidth needlets are defined by
\begin{eqnarray*}
	\psi _{j,k}\left( x\right) :=  
	&=&\sqrt{\lambda _{j,k}}\sum_{\ell \in \Lambda _{j}}b_{j}\left( \ell \right)
	Z_{\ell}\left( \langle x,\xi _{j,k}\rangle \right) ,\quad j\geq
	0,k=1,\ldots ,K_{j},
\end{eqnarray*}%
while, the shrinking needlet coefficients are given by
\begin{equation*}
	\beta _{j,k}=\langle f,\psi _{j,k}\rangle _{L^{2}\left( \mathbb{S}%
		^{2}\right) },\text{ for }f\in L^{2}\left( \mathbb{S}^{d}\right) ,\quad
	j\geq 1,k=1,\ldots ,K_{j},
\end{equation*}%
such that, for any $f\in L^{2}\left( \mathbb{S}^{d}\right) $,
\begin{eqnarray*}
	\beta _{j}\left( x\right) &=&\int_{\mathbb{S}^{d}}f\left( y\right) \Psi
	_{j}\left( x,y\right) dy  \notag \\
	&=&\sum_{k=1}^{K_{j}}\beta _{j,k}\psi _{j,k}\left( x\right) .
	\label{eq:needfield}
\end{eqnarray*}%
Following \cite[Eq. (43)]{npw1}, and \cite{dmt24}, the cubature points
and weights are chosen such that
\begin{equation}
	\lambda _{j,k}\approx S_{j-1}^{-2},\text{ \quad \quad \quad}K_{j}\approx S_{j-1}^{2},  \label{eqn:cubature}
\end{equation}%
where, for any two sequences $\left\{ a_{j},b_{j}\in \mathbb{R}\right\}
_{j\in \mathbb{N}}$ the notation $\left\{ a_{j}\approx b_{j}\right\} _{j\in
	\mathbb{N}}$ indicates that there exists a positive real valued constant $%
c\in \mathbb{R}$ such that $c^{-1}a_{j}\leq b_{j}\leq ca_{j},$ for all $j\in
\mathbb{N}$.
\subsubsection{Properties of the flexible bandwidth needlets.}
We now summarize the main properties of flexible bandwidth needlets, highlighting how their construction allows for precise control over frequency localization, spatial decay, and smoothness across multiple regimes.
\paragraph{Tightness of the frame and reconstruction formula.}
Analogously to needlets, our wavelet system is a tight frame. Frames can be
viewed as a redundant basis. More specifically, a countable family of
functions $\left\{ e_{k}:k\geq 1\right\} $ is a frame if there exist
constants $c,C\in \mathbb{R}^{+}$, named \emph{frame bounds}, such that for all $f\in
L^{2}\left( \mathbb{S}^{d}\right) $, it holds that
\begin{equation*}
	c\left\Vert f\right\Vert _{L^{2}\left( \mathbb{S}^{d}\right) }\leq
	\sum_{k\geq 1}\left\vert \langle f,e_{k}\rangle \right\vert ^{2}\leq
	C\left\Vert f\right\Vert _{L^{2}\left( \mathbb{S}^{d}\right) }.
	\label{eq:frame}
\end{equation*}%
The frame is called tight if $c=C$. In this case $c,C$ are the \emph{tightness
	constants}.
The flexible bandwidth needlet system $\left\{ \psi _{j,k}:j\geq
1;k=1,\ldots ,K_{j}\right\} $ provides a tight frame on $L^{2}\left( \mathbb{%
	S}^{2}\right) $ with frame bounds $c=C=1$:
\begin{equation*}
	\left\Vert f\right\Vert _{L^{2}\left( \mathbb{S}^{d}\right)
	}^{2}=\sum_{j\geq 0}\sum_{k=1}^{K_{j}}\left\vert \beta _{j,k}\right\vert
	^{2}.  \label{eq:tight}
\end{equation*}
As a straightforward consequence, the following needlet decomposition holds:
for $f\in L^{2}\left( \mathbb{S}^{d}\right) $,
\begin{equation*}
	f\left( x\right) =\sum_{j\geq 0}\sum_{k=1}^{K_{j}}\beta _{j,k}\psi
	_{j,k}\left( x\right) ,  \label{eq:reconstruction}
\end{equation*}%
in the $L^{2}$-sense.\\

\paragraph{Compact support in the frequency domain}
Flexible bandwidth needlets can be interpreted as finite, weighted linear combinations of spherical harmonic components (multipoles), with weights determined by the spectral window function $b_j$. Since $b_j$ is supported within the interval $\left[S_{j-1},S_{j+1}\right]$, each needlet $\psi_{j,k}$ involves only a finite range of multipole frequencies. Importantly, the number of active multipoles—that is, the bandwidth of the needlet—is directly governed by the growth behavior of the scaling sequence $\lbrace S_j: j\geq 1\rbrace$. In the shrinking regime, where $S_j$ grows slowly, the spectral window becomes narrower and more localized, while in the spreading regime, a faster growth of $S_j$leads to a broader window and a larger set of contributing multipoles. This flexibility enables the construction to adaptively balance frequency resolution and spatial localization discussed below.

\paragraph{Spatial localization}
As in \cite{dmt24}, it is straightforward to derive the following space localization property combining Equations \eqref{eq:StanLocProp} and \eqref{eqn:cubature} for $\lbrace \psi_{j,k}:j \geq 1; k= 1, \ldots,K_j\rbrace$i. For any $x\in \mathbb{S}^{2}$ and
for all integers $M$, there exist a constant $C_{M}$ such that%
\begin{equation}
	\begin{split}
	\left\vert \psi _{j,k}\left( x\right) \right\vert \leq & C_{M}  \left(\frac{S_{j+1}^2- S_{j+1}^2}{S_{j-1}}\right) \\ &\max\left(\frac{1}{\left( 1+S_{j-1}\Theta_{j,k}\right)^{2M}} ,  \frac{1}{\left( 1+\left(S_{j}-S_{j-1}\right)\Theta_{j,k}\right)^{2M}}\right), \label{eq:StanLocProp2}
	\end{split}
\end{equation}
where $\Theta_{j,k}:=\arccos (\left\langle x,\xi_{j,k}\right\rangle )$ denotes the
standard geodesic distance on the unit sphere. Equation~\eqref{eq:StanLocProp2} shows that if the point $x$ lies outside a spherical cap of radius approximately $S_{j-1}^{-1}$ centered at $\xi_{j,k}$, then the needlet $\psi_{j,k}(x)$ exhibits rapid decay, with a rate controlled either by $S_{j-1}$ or by the difference $S_{j}-S_{j-1}$. In the following, we will examine in more detail when each of these regimes dominates and how this affects localization behavior across scales.\\

\section{Scaling properties and regime classification}\label{sec:class}
In this section we investigate how the asymptotic behaviour of the center sequence $\lbrace S_j : j \geq 1\rbrace$ determines the concentration properties of the associated needlet system $\lbrace \psi_{j,k}: j \geq 1, k=1,\ldots, K_j\rbrace$. We introduce a classification of the growth regimes - subexponential, exponential, and superexponential - based on he scale growth of  $\lbrace S_j : j \geq 1\rbrace$, and establish a precise correspondence with the shrinking, stable, and spreading regimes. We also analyze how the dilation of the scales interacts with this classification, highlighting the impact of specific growth models within each regime. 
\subsection{Characterization of the scale growth regimes}\label{sec:char}
Let us introduce preliminarily $\left(S_{j+1}-S_{j-1}\right)$ as the \emph{width} of the support, the relative bandwidth ratio measures how broad the effective support $\left[S_{j-1},S_{j+1}\right]$ is, relative to its center $S_j$. It's a dimensionless descriptor of bandwidth, to help assessing how localized your needlet system is at scale. Note that this is analogous to the \textit{Scale width} $k_j=S_{j+1}/S_{j-1}$, detailed in \cite{dmt24}. 
More important to understand the growth of the center sequences and thus the asymptotic properties of flexible bandwidth needlets is the  \textit{relative bandwidth ratio}:
	\begin{equation*}
		\Delta_j = \frac{S_{j+1}-S_{j-1}}{S_j} ,
	\end{equation*}
 a key quantity for understanding flexible bandwidth needlets. Indeed, it captures how rapidly the effective frequency support changes between scales. This ratio directly influences the localization and resolution properties of the needlets, distinguishing the different regimes of shrinking, standard, and spreading bandwidths, and thus is essential to characterize their behavior and adaptivity across scales.
Then, the determination of the center sequence $\lbrace S_j: j \in \mathbb{N}\rbrace$ and the asymptotic behaviour of the relative bandwidth ratio allows us to classify the resulting needlet systems into three distinct regimes, standard, shrinking, and spreading. In particular, we have
\begin{equation*}
	\begin{split}
		\lim_{j\rightarrow \infty }\Delta_{j}= \begin{cases} 0 & \text{ (shrinking regime)}\\%
			c^{\prime}>0 & \text{ (stable regime)}\\ 
			\infty &  \text{ (spreading regime)} \end{cases}.
	\end{split}
\end{equation*}
A convenient way to understand heuristically this classification is to think to $S_j$ as the center of the window support $\left[S_{j-1},S_{j+1}\right]$. Then:
\begin{itemize}
	\item \textit{Shrinking regime.} When $\Delta_j \rightarrow 0$, $\left[S_{j-1},S_{j+1}\right]$ becomes negligible if compared to $S_j$. In practical terms, each needlet at level $j$ involves only a relatively small amount of multipoles. When $j$ grows, the width $\left[S_{j-1},S_{j+1}\right]$ shrinks around $S_j$ so that adjacent needlets hardly overlap. This is ideally the perfect scenario to gain very precise spectral localization at high $j$, paying the price of a poorer localization in the real domain.  
	\item \textit{Stable - or standard - regime.} In this case, $\left[S_{j-1},S_{j+1}\right] \approx  S_j$. In other words, each needlet support is always a constant fraction of its center frequency. This stable growth is used in standard contructions because it guarantess a uniform trade-off between spatial and spectral localization.
	\item \textit{Spreading regime.} Here $\left[S_{j-1},S_{j+1}\right] \approx  S_j$ outpaces $S_J$. This construction is very well localized in the real domain, but adjacent needlets share most of the same multipoles, allowing high correlation between their coefficients.
\end{itemize}
Now, we place $\lbrace S_j: j \geq 1 \rbrace$ in one of three following sequence spaces according to its growth rate, using the \emph{logarithmic growth index}:
\begin{equation*}
	L_j = \frac{\log S_j}{j}.
\end{equation*}
We define
\begin{enumerate}
	\item \emph{Subexponential scale:}
	\begin{equation*}\label{eqn:sub}
		\mathcal{S}_{\operatorname{sub}} = \lbrace \lbrace S_j: j \geq 1 \rbrace: \underset{j \rightarrow \infty}{ \limsup} L_j=0\rbrace.
	\end{equation*}
Equivalently, $\lbrace S_j: j \geq 1 \rbrace \in 	\mathcal{S}_{\operatorname{sub}} $ if and only if for every $\delta>0$ 
there exists $J_\delta$ such that
\begin{equation*}
	S_j<e^{\delta J} \quad \text{for all } j>J_{\delta}
\end{equation*}
In particular, $\underset{j \rightarrow \infty }{\lim } L_j = 0$.
\item \emph{Exponential scale:}
	\begin{equation*}\label{eqn:exp}
	\mathcal{S}_{\operatorname{exp}} = \lbrace \lbrace S_j: j \geq 1 \rbrace: \underset{j \rightarrow \infty}{ \lim }L_j = c>0.
\end{equation*}
Equivalently, $\lbrace S_j: j \geq 1 \rbrace \in 	\mathcal{S}_{\operatorname{exp}} $ if there exists $B>1$ and positive constants $c_1\leq c_2$ such that, for all sufficiently large $j$ 
\begin{equation*} 
	c_1  B^j\leq S_j \leq c_2 B^{j}
\end{equation*}	
In particular, $\underset{j \rightarrow \infty }{\lim } L_j = \log B$.
\item \emph{Superexponential scale:}
	\begin{equation*}\label{eqn:super}
	\mathcal{S}_{\operatorname{super}} = \lbrace \lbrace S_j: j \geq 1 \rbrace: \underset{j \rightarrow \infty}{ \lim }L_j = \infty\rbrace.
\end{equation*}
Equivalently, $\lbrace S_j: j \geq 1 \rbrace \in 	\mathcal{S}_{\operatorname{super}} $  if and only if for every $C>0$ there exists $J_C$  such that
\begin{equation*}
S_j>e^{Cj} \quad \text{for all } j > J_C
\end{equation*}
In other words, $\underset{j \rightarrow \infty }{\lim } L_j = \infty$.
\end{enumerate}
The following two theorems characterize the asymptotic behavior of the scaling sequence $\lbrace S_j : j \geq 1 \rbrace$ and its impact on the localization properties of the associated needlet functions $\lbrace \psi_{j,k}: j \geq 1; k=1,\ldots, K_j\rbrace$. \\
The first result (Theorem \ref{thm:regime}) establishes a precise correspondence between the growth regime of $\lbrace S_j : j \geq 1 \rbrace$
(subexponential, exponential, or superexponential) and the localization behavior of the needlet system $\lbrace \psi_{j,k}: j \geq 1; k=1,\ldots, K_j\rbrace$, identifying whether it lies in the shrinking, standard, or spreading regime, respectively.
This classification hinges on the asymptotic property of the relative bandwidth ratio $\lbrace \Delta_j: j \geq 1\rbrace$, which captures the scale width of the multipole window support at each resolution level. 

\begin{theorem}\label{thm:regime}
	Let the notation above prevail. Then it holds 
	\begin{itemize}
		\item $\lbrace S_j : j \geq 1\rbrace \in	\mathcal{S}_{\operatorname{sub}}$ if and only if $\lbrace \psi_{j,k}: j \geq 1, k=1,\ldots K_j \rbrace$ belong to the \emph{shrinking regime};
		\item $\lbrace S_j : j \geq 1\rbrace \in	\mathcal{S}_{\operatorname{exp}}$ if and only if $\lbrace \psi_{j,k}: j \geq 1, k=1,\ldots K_j \rbrace$ belong to the \emph{standard regime};
		\item $\lbrace S_j : j \geq 1\rbrace \in	\mathcal{S}_{\operatorname{super}}$ if and only if $\lbrace \psi_{j,k}: j \geq 1, k=1,\ldots K_j \rbrace$ belong to the \emph{spreading regime}.
	\end{itemize}	
\end{theorem}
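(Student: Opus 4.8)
The plan is to reduce both of the quantities defining the dichotomy — the logarithmic growth index $L_j=(\log S_j)/j$ and the relative bandwidth ratio $\Delta_j=(S_{j+1}-S_{j-1})/S_j$ — to the single sequence of dilation factors $h_j=S_{j+1}/S_j>1$, and then to read off the trichotomy from the asymptotics of $h_j$ alone. Two exact identities carry out the bookkeeping. Since $S_j=\prod_{k=0}^{j-1}h_k$, we have $\log S_j=\sum_{k=0}^{j-1}\log h_k$, so that $L_j$ is exactly the arithmetic (Ces\`aro) mean of $\log h_0,\dots,\log h_{j-1}$; and since $S_{j-1}/S_j=h_{j-1}^{-1}$, we obtain the algebraic identity $\Delta_j=h_j-h_{j-1}^{-1}$. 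The entire argument then hinges on controlling the growth of $\log h_j$.

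First I would establish the key asymptotic relation $\log h_j\sim\gamma(j)\,j^{p}$. Writing $\log S_j=\gamma(j)\,j^{p+1}/(p+1)$, the increment $\log h_j=\log S_{j+1}-\log S_j$ is expanded by combining the elementary estimate $(j+1)^{p+1}-j^{p+1}=(p+1)\,j^{p}(1+o(1))$ with the slow variation of $\gamma$, which by Karamata's theory gives $\gamma(j+1)/\gamma(j)\to1$; this yields $\log h_j=\gamma(j)\,j^{p}(1+o(1))=(p+1)\,L_j\,(1+o(1))$ for $p>-1$. Consequently $L_j$ and $\log h_j$ tend to $0$, to a finite positive limit, or to $+\infty$ \emph{simultaneously}, the alternative being selected by the sign of $p$ (with the borderline $p=0$ discriminated by $\lim_j\gamma(j)$): $p<0$, or $p=0$ with $\gamma\to0$, places $\{S_j\}$ in $\mathcal{S}_{\operatorname{sub}}$; $p=0$ with $\gamma\to c>0$ places it in $\mathcal{S}_{\operatorname{exp}}$; and $p>0$, or $p=0$ with $\gamma\to\infty$, places it in $\mathcal{S}_{\operatorname{super}}$.

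Next I would translate the limit of $\log h_j$ into the limit of $\Delta_j$ through $\Delta_j=h_j-h_{j-1}^{-1}$. If $\log h_j\to0$ then $h_j\to1$ and $h_{j-1}^{-1}\to1$, whence $\Delta_j\to0$ (shrinking), matching $L_j\to0$ (subexponential). If $\log h_j\to\log B\in(0,\infty)$ then $h_j\to B$ and $\Delta_j\to B-B^{-1}>0$ (standard), matching $L_j\to\log B>0$ (exponential). If $\log h_j\to\infty$ then $h_j\to\infty$ dominates $h_{j-1}^{-1}\in(0,1)$, so $\Delta_j\to\infty$ (spreading), matching $L_j\to\infty$ (superexponential). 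This proves the three forward implications $\mathcal{S}_{\operatorname{sub}}\Rightarrow$ shrinking, $\mathcal{S}_{\operatorname{exp}}\Rightarrow$ standard, $\mathcal{S}_{\operatorname{super}}\Rightarrow$ spreading. To close the biconditionals I would then invoke that the three scale classes partition the admissible (regularly varying) sequences and that the three needlet regimes are likewise mutually exclusive and exhaustive; matching the partitions turns the three one-sided implications into genuine equivalences.

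The hard part will be the expansion of $\log h_j$ in the shrinking case $p<0$, where $L_j$ and $\Delta_j$ both vanish: I am comparing two quantities that tend to $0$, so I must guarantee that the $o(1)$ error terms in $\log h_j\sim\gamma(j)\,j^{p}$ are genuinely of lower order, lest $\Delta_j$ secretly converge to a positive constant and collapse the shrinking case into the standard one. This is precisely where the Karamata slow-variation hypothesis is indispensable: it supplies the uniform control of the type $\gamma(j+1)-\gamma(j)=o(\gamma(j)/j)$ needed to validate the increment expansion, and, equivalently, it forbids the oscillation of $\log h_j$ that would otherwise allow the Ces\`aro mean $L_j$ to converge while $h_j$ — and hence $\Delta_j$ — fails to. Establishing this regularity-controlled expansion rigorously, rather than the subsequent case-matching, is the crux of the proof.
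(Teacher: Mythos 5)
Your proposal imports a hypothesis that the statement does not grant. You base everything on the parametrization $\log S_j=\gamma(j)\,j^{p+1}/(p+1)$ with $\gamma$ slowly varying, i.e., on Condition~\ref{cond_epsilon}; but Theorem~\ref{thm:regime} is stated --- and proved in the paper --- for an \emph{arbitrary} strictly increasing center sequence: ``the notation above'' refers only to the definitions of $\Delta_j$, $L_j$ and the classes $\mathcal{S}_{\operatorname{sub}},\mathcal{S}_{\operatorname{exp}},\mathcal{S}_{\operatorname{super}}$, while the regular-variation hypothesis enters only later, in Theorem~\ref{thm:regimes}. The paper's own proof is accordingly structure-free: in the shrinking case it writes $\Delta_j=\exp(A_j)-\exp(-A_{j-1})$ with $A_j=(j+1)L_{j+1}-jL_j$ and bounds this directly from $L_j\to0$, and each converse is obtained by iterating inequalities of the form $S_{j+1}\le(1+\delta)S_j$ (resp.\ $S_{j+1}\ge e^{\epsilon}S_j$) to trap $L_j$, concluding by contradiction along subsequences. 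Even if every step of your argument were tight, it would establish the trichotomy only inside the regularly varying class, hence a strictly weaker statement. Your closing move compounds this: upgrading the three one-sided implications to equivalences by ``matching the partitions'' requires both trichotomies to be \emph{exhaustive}, which is false for general sequences ($L_j$ and $\Delta_j$ need not have limits at all); exhaustiveness is precisely what regular variation buys, so this step again silently invokes the unavailable hypothesis. The paper avoids the issue by proving both directions of each equivalence separately.

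There is also an analytic gap inside your restricted setting. You derive the key relation $\log h_j\sim\gamma(j)\,j^{p}$ by \emph{differencing} $\log S_j=\gamma(j)\,j^{p+1}/(p+1)$, asserting that Karamata theory supplies the increment control $\gamma(j+1)-\gamma(j)=o\left(\gamma(j)/j\right)$. It does not: slow variation only gives $\gamma(j+1)/\gamma(j)\to1$, i.e., $\gamma(j+1)-\gamma(j)=o\left(\gamma(j)\right)$, and by the representation theorem a slowly varying $\gamma$ may carry a convergent but oscillating factor $c(j)\to c$ whose increments are much larger than $\gamma(j)/j$. For such $\gamma$ the cross term $\left(\gamma(j+1)-\gamma(j)\right)j^{p+1}$ dominates $\gamma(j)j^{p}$, the differencing fails, and $h_j$ (hence $\Delta_j$) can diverge along a subsequence even though $L_j\to0$ --- exactly the collapse you were trying to rule out. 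The legitimate direction, and the one the paper actually uses in the proof of Theorem~\ref{thm:regimes}, is the reverse: define the shifts $\varepsilon_j=\gamma(j)j^{p}$ first, set $\log h_j=\log\left(1+\varepsilon_j\right)$, and \emph{sum} via Karamata's theorem to obtain $\log S_j\sim\gamma(j)j^{p+1}/(p+1)$; summation of regularly varying sequences is sound, while differencing an asymptotic equivalence is not.
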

Figure \ref{fig:sequences} presents a graphical example of scaling sequences, bandwith ratios and logarithmic growth indexes in the three different regimes.
\begin{figure}[htbp]
	
	\centering
	\includegraphics[width=0.49\textwidth]{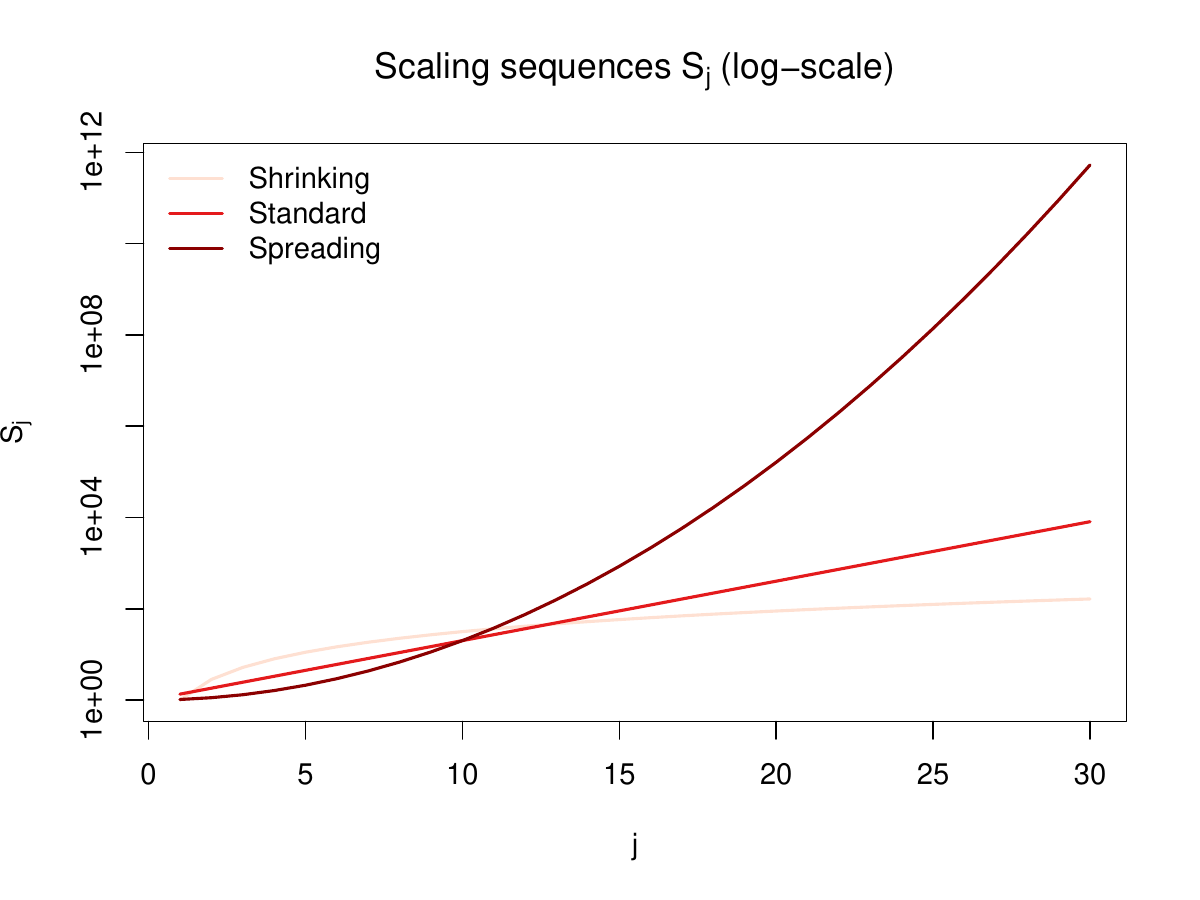}
	\hfill
	\includegraphics[width=0.49\textwidth]{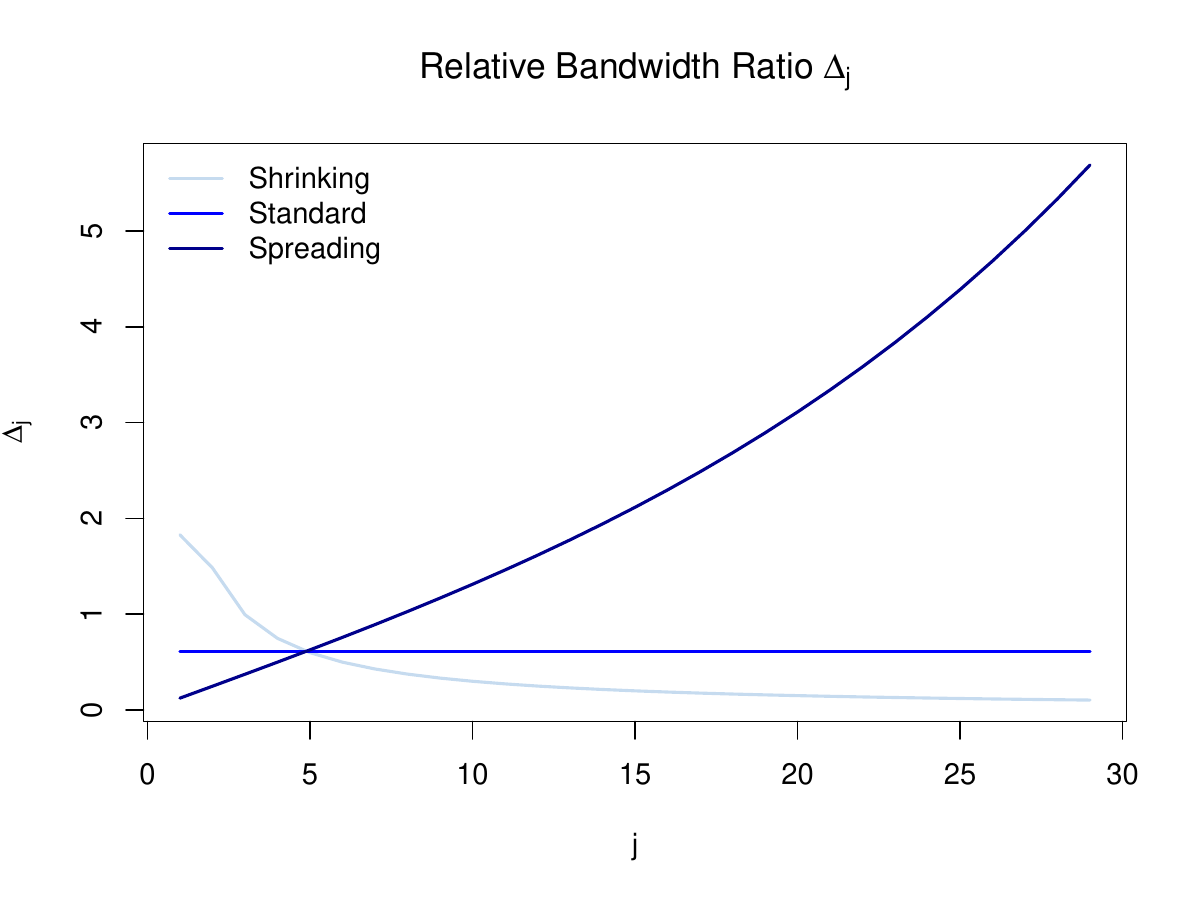}
		\hfill
	\includegraphics[width=0.49\textwidth]{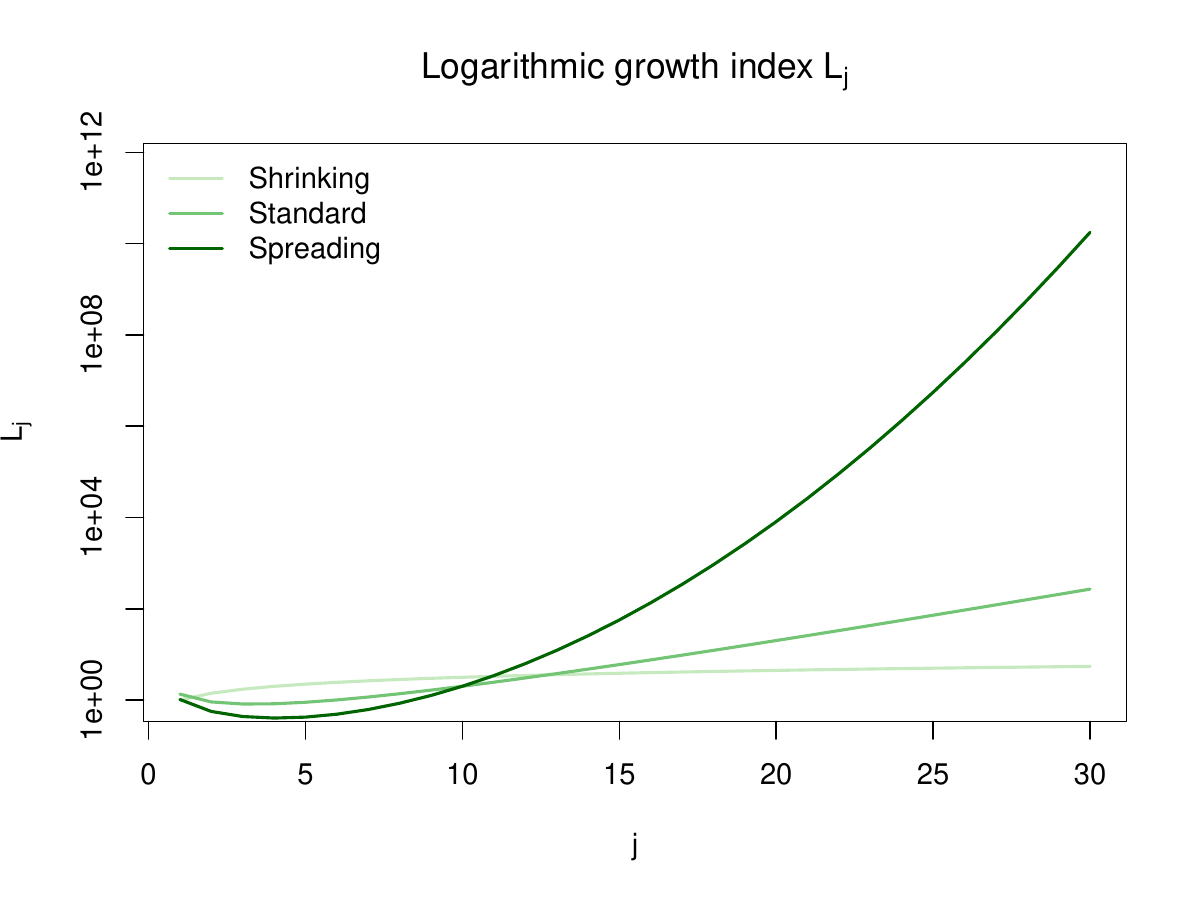}
	
\caption{\textbf{Scaling Sequence, Relative Bandwidth Ratio, and Logarithmic Growth Index in the Three Regimes.} 
	The top-left panel shows examples of scaling sequences $S_j$ in the shrinking, standard, and spreading regimes, plotted on a logarithmic scale. 
	The top-right panel displays the relative bandwidth ratio $\Delta_j = (S_{j+1} - S_{j-1}) / S_j$ across the three regimes. 
	The bottom panel presents the logarithmic growth index $L_j = \log S_j / j$, highlighting the different growth behaviors.}
	\label{fig:sequences}
\end{figure}
The proof of Theorem \ref{thm:regime} is available in Section \ref{sec:proofs}

 Having established the correspondence between the scaling growth behavior and the qualitative regimes of the related needlets, we now turn to a more quantitative aspect. \\
 The second result (Theorem \ref{thm:local}) provides a uniform bound on $l\left \vert \psi_{j,k} (x) \right \vert$  as $j \rightarrow \infty$, explicitly quantifying the rate of decay in terms of the scaling sequence $S_j$and thus capturing the localization strength in each regime. The proof is in Section \ref{sec:proofs}
 \begin{theorem}\label{thm:local}
 	As $j \rightarrow \infty$, for all $x \in \mathbb{S}^2$, and $M \in \mathbb{N}$, with $M >2$, there
 	exists a constant $c_M>0$ such that
 	\begin{itemize}
 		\item in the \textit{shrinking regime} ($\lim_{j \rightarrow \infty} \Delta_j =0$):  		
 			\begin{equation*}
 			\psi_{j,k} (x) \leq C_M  \left( \frac{S_{j+1}^2-S_{j-1}^2}{S_{j-1}}\right)\frac{1}{\left(1+\left(S_{j}-S_{j-1}\right)\Theta_{j,k}\right)^{M}}
 		\end{equation*}
 			\item in the \textit{standard regime} ($\lim_{j \rightarrow \infty} \Delta_j =\log B$, $B>1$):  		
 		\begin{equation*}
 			\psi_{j,k} (x) \leq C_M   \left(\frac{B^4-1}{B} \right)\frac{B^j}{\left(1+B^j\Theta_{j,k}\right)^{M}}
 		\end{equation*}
 	\end{itemize}
 \end{theorem}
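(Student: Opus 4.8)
The plan is to derive both bounds by specializing the general spatial localization estimate~\eqref{eq:StanLocProp2} for $\left\vert \psi_{j,k}(x)\right\vert$ to each regime, showing that in each case one of the two competing terms in the $\max$ dominates asymptotically. The starting point is that for every $M\in\mathbb{N}$ there is a constant $C_M>0$ with
\begin{equation*}
\left\vert \psi_{j,k}(x)\right\vert \leq C_M\left(\frac{S_{j+1}^2-S_{j-1}^2}{S_{j-1}}\right)\max\left(\frac{1}{\left(1+S_{j-1}\Theta_{j,k}\right)^{2M}},\frac{1}{\left(1+\left(S_j-S_{j-1}\right)\Theta_{j,k}\right)^{2M}}\right),
\end{equation*}
so the whole task reduces to comparing the two denominators, i.e. comparing $S_{j-1}$ with the one-step gap $S_j-S_{j-1}$. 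Since $S_j-S_{j-1}=S_{j-1}(h_{j-1}-1)$ with $h_{j-1}=S_j/S_{j-1}$, the comparison is governed entirely by whether $h_{j-1}-1$ tends to $0$, to a positive constant, or to infinity, which by the regime classification (Theorem~\ref{thm:regime}) is exactly the shrinking/standard/spreading trichotomy.

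First I would treat the shrinking regime. Here $\Delta_j\to 0$ forces $h_{j-1}\to 1$, hence $S_j-S_{j-1}=o(S_{j-1})$, so for all large $j$ we have $S_j-S_{j-1}\leq S_{j-1}$ and therefore $1+(S_j-S_{j-1})\Theta_{j,k}\leq 1+S_{j-1}\Theta_{j,k}$. Consequently the second denominator is the smaller one and the $\max$ is attained by the term with $S_j-S_{j-1}$; dropping the other term yields the stated shrinking bound (with the exponent relaxed from $2M$ to $M$, which is legitimate since $1+(S_j-S_{j-1})\Theta_{j,k}\geq 1$). Next I would handle the standard regime, where $h_{j-1}\to B$ so that $S_{j-1}\asymp B^{j}$ and $S_j-S_{j-1}\asymp B^{j}$ as well; both denominators are then comparable up to constants, so the $\max$ is controlled (up to absorbing a constant into $C_M$) by $1/(1+B^j\Theta_{j,k})^{M}$. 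For the prefactor one substitutes $S_{j+1}\asymp B^{j+1}$, $S_{j-1}\asymp B^{j-1}$ to get $(S_{j+1}^2-S_{j-1}^2)/S_{j-1}\asymp (B^4-1)B^j/B\cdot(\text{const})$, matching the claimed $\frac{B^4-1}{B}B^j$ form up to the constant $C_M$.

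The main obstacle, and the step requiring the most care, is making the asymptotic comparisons uniform in $x$ (equivalently in $\Theta_{j,k}$) and rigorously justifying that the subdominant term in the $\max$ can be absorbed into the surviving one with a single constant independent of $j$ and $x$. The cleanest way is to prove the pointwise inequality $\min(A_{j-1},A_j-A_{j-1})\Theta_{j,k}$ comparisons at the level of the denominators themselves (which hold for \emph{all} $\Theta_{j,k}\geq 0$ once $S_j-S_{j-1}\leq S_{j-1}$, or once the two gaps are comparable), rather than arguing termwise after evaluating at a fixed $x$; this removes any dependence on where $x$ sits relative to the cap of radius $S_{j-1}^{-1}$. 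A secondary point to check carefully is the exact algebra of the prefactor: rewriting $S_{j+1}^2-S_{j-1}^2=(S_{j+1}-S_{j-1})(S_{j+1}+S_{j-1})$ and tracking the powers of $B$ in the standard case, and verifying in the shrinking case that the prefactor is simply carried through unchanged. The reduction of the exponent from $2M$ to $M$ is harmless and I would note it explicitly so the statement's hypothesis $M>2$ is seen to be used only to guarantee integrability/summability downstream rather than in the bound itself.
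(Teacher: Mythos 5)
Your proposal is correct and follows essentially the same route as the paper: both arguments specialize the localization bound \eqref{eq:StanLocProp2} by comparing $S_{j-1}$ with the gap $S_j-S_{j-1}$, showing via the positivity of the two addends in $\Delta_j=\frac{S_{j+1}-S_j}{S_j}+\frac{S_j-S_{j-1}}{S_j}$ (equivalently, $h_{j-1}\to1$) that the gap term dominates the max in the shrinking regime, and that both scales are comparable to $B^j$ in the standard regime. If anything, your version is slightly more complete, since you track the prefactor algebra and the harmless relaxation of the exponent from $2M$ to $M$, which the paper's proof leaves implicit.
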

  	Figure \ref{fig:decay} presents and example for the different decays for the flexible bandwith needlets at the same resolution level $j=2$ in the three regimes. Note that the shrinking regime exhibits the weakest spatial localization, although it remains quasi-exponential.
 	 \begin{figure}[htbp]
 		 	\centering
 		 	\includegraphics[width=\textwidth]{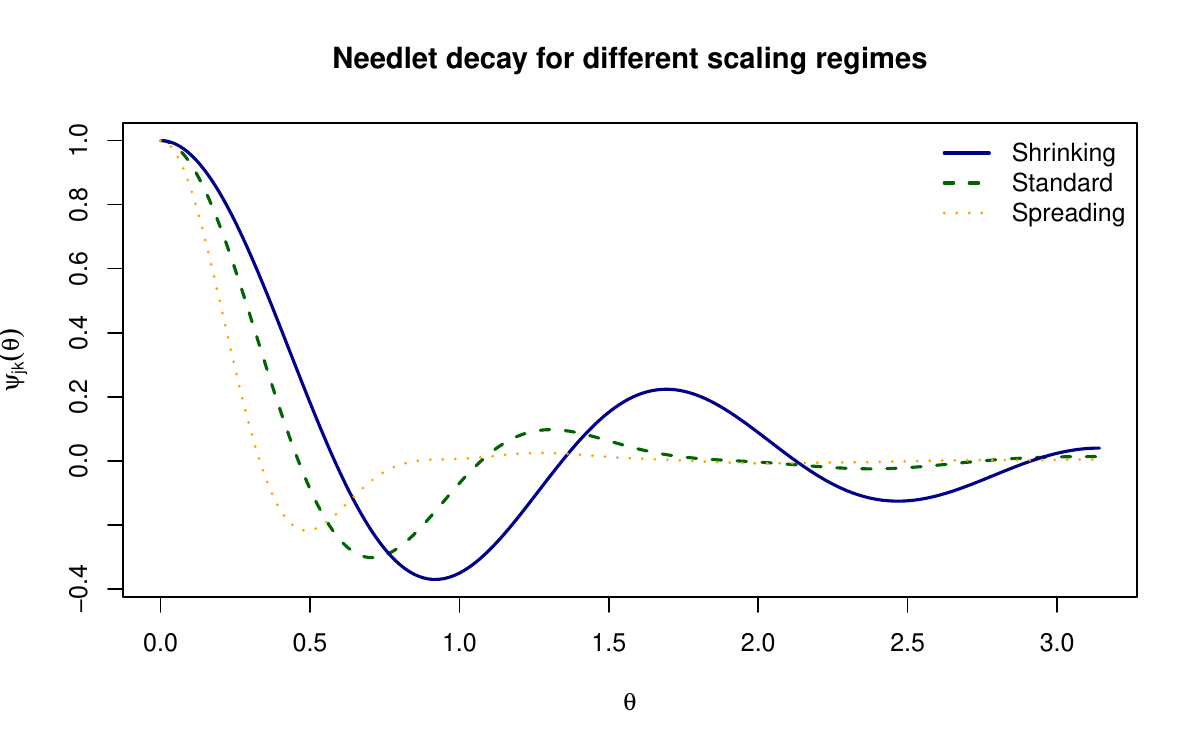}
 		 	\hfill
 		 	\caption{\textbf{Needlet decays in the three regimes.} 
 			 		The different decay behaviors with respect to angular distances for the three regimes are qualitatively illustrated here for $j=2$ and cubature point $\xi_{jk}$ on the North Pole. }
 		 	\label{fig:decay}
 		 \end{figure}
\section{Bandwidth Dilation and Growth Regimes}\label{sec:dila}
In this section, we introduce the \emph{bandwidth dilation factor}
$$
h_j=\frac{S_{j+1}}{S_j},
$$
which measures the relative growth of $S_j$. By quantifying how rapidly $S_j$ increases, $h_j$ allows us to classify finer subregimes, each with explicit scaling laws and sharper analytical localization bounds, and will be pivotal in studying the asymptotic uncorrelation properties of the coefficients $\lbrace\beta_{j,k}:k=1,\ldots,K_j\rbrace$.
More in detail, the \textit{bandwidth dilation factor} $\lbrace h_{j}: j \geq 1\rbrace$ controls the relative enlargement of the centers across scales in the needlet system.  
As for $\Delta_j$, depending on its limiting behavior, the sequence ${h_j}_{j \geq 1}$ characterizes the distinct scaling regimes that significantly influence the structural properties of the needlet system.
Indeed, it is the possible also to rewrite the scaling sequence as
\begin{equation*}
	S_{j}=\prod_{i=0}^{j-1}h_{i}S_{0}=\prod_{i=0}^{j-1}h_{i}, \label{eq:prodSj}
\end{equation*}%
since $S_0 =1$. Also,
\begin{equation*}
	\Delta_{j}=h_j-\frac{1}{h_{j-1}}
\end{equation*}%
In multiscale constructions such as flexible‐bandwidth needlets, the sequence of dilation factors
$h_j$ serves as the fundamental driver of how scales evolve and overlap.
A convenient way to provide a classification is to define the center sequence $\lbrace S_j: j \geq 1 \rbrace$  in terms of \emph{shifts} (or \emph{perturbations}) $\lbrace \varepsilon_j : j \geq 1 \rbrace$. At each $j$, each scaling $S_j$ is built from the $j-1$ one by applying a small perturbation, such that
\begin{equation*}
h_j =  1 + \varepsilon_j. 
\end{equation*} 
Then
\begin{equation*}
	\begin{split}
	S_j & = S_0 \prod_{k=0}^{j} h_k \\
	& =  \exp\left(\sum_{k=0}^{j} \log \left(1+\varepsilon_j\right)\right)\\
	& \approx \exp \left(\sum_{k=0}^{j} \varepsilon_j\right).
	\end{split} \label{eqn:Sj}
\end{equation*}
In the following, we classify these regimes in detail, characterize the corresponding perturbations $\varepsilon_j$ and illustrate how each regime influences the growth of the central scales
$S_j=\exp\left(\sum_{k=0}^{j-1} \varepsilon_k\right)$.\\
This classification provides a unified, explicitly computable framework for understanding the trade-offs between localization, redundancy, and computational scalability in flexible bandwidth needlet frames. 
 It is thus natural to use sequences of regular variation to explicitly balance these competing demands, providing a unified class of dliation schemes, which guarantees explicit control on the $\lbrace S_j:j \geq 1\rbrace$ and transparent criteria for the localization trade-off. We recall here the definition of \emph{slowly} and \emph{regularly varying functions} due to J.Karamata (see for example \cite{bingham,boj17}).
\begin{definition}\label{def:karamata}
	A measurable function $\gamma:\mathbb{R}^+ \mapsto\mathbb{R}^+$ is \emph{slowly varying} if for all $a\geq0$
	\begin{equation*}
		\underset{u \rightarrow \infty}{\lim} \frac{\gamma(au)}{\gamma(u)}=1;
	\end{equation*}
	A measurable function $\gamma^\prime:\mathbb{R}^+ \mapsto\mathbb{R}^+$ is \emph{regularly varying} if for all $a\geq0$
	\begin{equation*}
		\underset{u \rightarrow \infty}{\lim} \frac{\gamma^\prime(au)}{\gamma(u)}=c \in \mathbb{R}^+.
	\end{equation*}
\end{definition}
Observe that (cf., e.g., \cite{bingham}) 
\begin{itemize}
	\item 	If $\gamma$ is slowly varying, then $\gamma^\prime(x)=\gamma(x) x^{\delta}$, with $\delta>0$ is regularly varying of index $\beta$;
	\item conversely, if $\gamma^\prime$ is regularly varying, then it necessarily has a representation of the form $\gamma^\prime(x)=\gamma(x) x^{\delta}$, with $\delta>0$ is regularly varying of index $\delta$.
	\item (Generalized Karamata Theorem) A positive sequence $\lbrace \gamma(k) : i \geq K$ is regularly varying of index $\delta$ if and only if for some $\sigma>-1-\delta$ 
	\begin{equation*}
		\underset{j \rightarrow \infty}{\lim} \frac{1}{n^{1+\sigma}\gamma(k)}\sum_{k=1}^{j}k^{\sigma}\gamma(k)=\frac{1}{\sigma+\delta+1}
	\end{equation*}
\end{itemize}
The regular variation hypothesis is thus the most natural and minimally restrictive condition under which asymptotic behaviour of the scales $\lbrace S_j: j \geq 1 \rbrace$ can be fully classified and optimally tuned.  
\begin{condition}\label{cond_epsilon}
	The shift sequence $\lbrace \epsilon _ j : j \geq 1\rbrace$ is regularly varying of order $p$, that is, ther exist a slowly regular function $\gamma:\mathbb{R}^+ \mapsto \mathbb{R}^+$ and $p \in \mathbb{R}$ such that for all $j$
	\begin{equation}\label{eqn:rv}
		\varepsilon_j = \gamma(j) j^{p}.
	\end{equation}
\end{condition}

\subsection{Dilations and asymptotic behavior of the scaling centers}
In this section, we focus on the shrinking regime, where the effective bandwidth decreases via a sequence $h_j \rightarrow 1$, as the resolution index $j$ increases. In other words, the scales freeze in the sense that $\epsilon_j$ decreases to zero, causing the bandwidth ratios to narrow at successive levels. Consequently, the width $\left[S_{j+1}-S_{j-1}\right]$ grows more slowly than the center $S_j$. However, by the uncertainty principle, there is a trade‐off: as the frequency band widens more slowly, the spatial decay of the corresponding needlet is less pronounced. In concrete terms, one obtains finer spectral resolution at each level but somewhat more spread‐out spatial support.\\
To clarify the relationship between the incremental scaling shifts $\lbrace \varepsilon_j :j\geq \rbrace$ and the scaling centers $\lbrace S_j : j \geq 1 \rbrace$, we examine how small perturbations in dilation translate into changes in ${\Delta_j}$ under the shrinking regime. Note that, since by definition
	\begin{equation*}
		S_{j}=\exp\left(\sum_{k=0}^{j-1} \varepsilon_k\right);\quad 	S_{j+1}=\exp\left(\sum_{k=0}^{j} \varepsilon_k\right);\quad 	S_{j-1}=\exp\left(\sum_{k=0}^{j-2} \varepsilon_k\right),
	\end{equation*}	
	it holds that
	\begin{equation*}
		\Delta_j = \exp\left(\varepsilon_{j}\right)-\exp\left(-\varepsilon_{j-1}\right).
	\end{equation*}	
	Since in the shrinking regime $\underset{j \rightarrow \infty}{\lim} \varepsilon_j = 0$, we expand both the exponential functions at the first order to obtain:
	\begin{equation*}
		\Delta_j \simeq \left(\epsilon_{j}+\epsilon_{j-1}\right)+\frac{1}{2} \left(\varepsilon_{j}^2-\varepsilon_{j-1}^2\right) + O\left(\varepsilon_{j}^3+\varepsilon_{j-1}^3\right)
	\end{equation*} 	
	As $j$ goes to $\infty$, $\varepsilon_{j}$ and $\varepsilon_{j-1}$ are equivalent and thus 
		\begin{equation*}
		\Delta_j \simeq 2\epsilon_{j}.
	\end{equation*} 		 
Below, we describe how the long‐term behavior of the scales $\lbrace S_j:j\geq 1\rbrace$ depends on whether the series $\sum_{j\geq0} \varepsilon_j$
converges to a finite value or diverges. These two cases yield qualitatively different bandwidth dynamics. When the series diverges, we also identify several relevant subregimes within the shrinking framework. This distinction is critical in applications such as nonparametric goodness‐of‐fit testing, where one must carefully balance spatial localization against frequency resolution. 
\begin{theorem}\label{thm:regimes}
	Under the assumptions \eqref{eqn:rv} in Condition \ref{cond_epsilon}, it holds that, as $j$ increases,
	\begin{equation*}\label{eq:Sj}
	S_j = \begin{cases}
			\exp\left(\gamma(j) \frac{j^{p+1}}{p+1}\right) & \text{ for }p\neq -1,\\
			\exp\left(\gamma(j) \log j \right)& \text{ for }p = -1, \gamma(j) \neq O\left(\left(\log j\right)^{-1}\right),\\
				\left( \log j \right)^{\eta}& \text{ for }p = -1, \gamma(j) =\eta \left(\log j\right)^{-1}, \eta>0.
		\end{cases}
	\end{equation*}
	The determination of the regime depends on the value of the regularity index and the asymptotic behavior of the slowly varying function $\gamma$ as follows:
	\begin{itemize}
		\item \emph{Shrinking regime.} In this case, it holds that 
		\[\lbrace p<0,  \text{any } \gamma\rbrace \quad \mbox{ or } \quad \lbrace p=0, \gamma:\underset{j \rightarrow\infty}{\lim} \gamma(j) = 0 \rbrace;\]
				\item \emph{Stable regime.} In this case, it holds that 
				\[\lbrace p=0, \gamma:\underset{j \rightarrow\infty}{\lim} \gamma(j) = c\rbrace,\] 
				with $0<c<\infty$;
			\item \emph{Spreading regime.} In this case, it holds that 
			\[\lbrace p>0, \text{ any }\gamma\rbrace \quad \mbox{ or }\quad \lbrace  p=0,\gamma:\underset{j \rightarrow\infty}{\lim} \gamma(j) = \infty\rbrace.\]
	\end{itemize}
	Additionally, in the shrinking regime, we can define two main subcases: 
	\begin{itemize}
		\item \emph{Totally convergent scales.} Here, it holds that \[\lbrace p<-1,\text{ any }\gamma  \rbrace \quad \mbox{ or } \quad  \lbrace p=-1, \gamma(j)=O\left(\left( \log j\right)^{-1-\delta}, \delta>0\right)\rbrace. \] In this case, $\underset{j \rightarrow \infty}{\lim}\sum_{k=0}^{j-1} \varepsilon_k = E < \infty$, and  
		\begin{equation*}
			S_j =  \exp \left( \sum_{k=0}^{j-1} \varepsilon_k\right) \underset{j \rightarrow \infty}{\longrightarrow}  S_{\infty} = e^{E} <\infty.
		\end{equation*}
		\item \emph{Divergent scales.}  Here, 
		\[\begin{split}
			&\lbrace p \in (-1,0),\quad  \text{ any }\gamma\rbrace \mbox{ or } \\ 
			&\lbrace p  = - 1, \quad \gamma(j) \geq  O \left(\left( \log j\right)^{-1+\delta}\right), \delta\geq0 \rbrace\mbox{ or } \\ 
			&\lbrace p  = 0, \quad \gamma(j):\underset{j \rightarrow \infty}{\lim}\gamma(j)=0\rbrace.\end{split}\] 
		Here, $\underset{j \rightarrow \infty}{\lim}\sum_{k=0}^{j-1} \varepsilon_k=\infty$, then the scales $\{S_j\}$ grow unboundedly but remain subexponential. In the first case we have \begin{equation*}
			S_j =  \exp \left( \sum_{k=0}^{j-1} \varepsilon_k\right) \underset{j \rightarrow \infty}{\longrightarrow}   \exp\left( \frac{j^{p+1}\gamma(j)}{p+1}\right).
		\end{equation*} 
		In the second case, if $\gamma(j)>O\left(\left(\log j\right)^{-1}\right)$, 
		\begin{equation*}
			S_j =  \exp \left( \sum_{k=0}^{j-1} \varepsilon_k\right) \underset{j \rightarrow \infty}{\longrightarrow}  j ^{\gamma(j)},
		\end{equation*}	
		while in the boundary case, $\gamma= \eta \left(\log j\right)^{-1}$, $\eta >0$
		\begin{equation*}
		S_j =  \exp \left( \sum_{k=0}^{j-1} \varepsilon_k\right) \underset{j \rightarrow \infty}{\longrightarrow}  \log j ^{\eta} .
		\end{equation*}	
		In the third case, 
		\begin{equation*}
			S_j =  \exp \left( \sum_{k=0}^{j-1} \varepsilon_k\right) \underset{j \rightarrow \infty}{\longrightarrow}  \exp\left( j \gamma(j)\right) .
		\end{equation*}	
		\end{itemize}
	
\end{theorem}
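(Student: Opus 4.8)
The plan is to reduce everything to the asymptotic analysis of the exponent $\Sigma_j := \sum_{k=0}^{j-1}\varepsilon_k = \sum_{k=0}^{j-1}\gamma(k)k^{p}$ (with $\varepsilon_k=\gamma(k)k^{p}$ by Condition~\ref{cond_epsilon}), since under the standing convention $S_j = \exp(\Sigma_j)$ every regime criterion is encoded in $L_j = \log S_j / j = \Sigma_j/j$. First I would isolate the leading case $p>-1$: here the generalized Karamata theorem recalled above, in its summation form, applies verbatim with $\sigma = p$ and variation index $\delta = 0$ (as $\gamma$ is slowly varying), and the admissibility condition $\sigma > -1-\delta$ reduces to exactly $p>-1$. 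This gives $\Sigma_j \sim \frac{j^{p+1}\gamma(j)}{p+1}$ and hence $S_j = \exp\!\left(\frac{j^{p+1}\gamma(j)}{p+1}(1+o(1))\right)$, which is the first line of the displayed formula.

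Next I would treat the convergent exponent range $p<-1$. Since a slowly varying $\gamma$ obeys $\gamma(k) = o(k^{\epsilon})$ for every $\epsilon>0$, the series $\sum_k \gamma(k)k^{p}$ converges by comparison with a summable power series, so $\Sigma_j \to E<\infty$ and $S_j \to S_\infty = e^{E}$; this is the totally convergent subcase. The complementary tail form of Karamata's theorem then yields $\sum_{k\geq j}\gamma(k)k^{p} \sim \frac{j^{p+1}\gamma(j)}{|p+1|}$, so that $S_j/S_\infty = \exp\!\left(-\sum_{k\geq j}\varepsilon_k\right) = \exp\!\left(\frac{j^{p+1}\gamma(j)}{p+1}(1+o(1))\right)$. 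I would flag here that, for $p<-1$, the first displayed line must be read as describing this ratio (the deviation from the limit) rather than $S_j$ itself, since the exponent tends to $0$.

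The delicate step, and the one I expect to be the main obstacle, is the borderline exponent $p=-1$, where $\varepsilon_k = \gamma(k)/k$ and the summation theorem no longer applies directly because the admissibility threshold $\sigma>-1$ is violated. Here I would pass to the integral comparison $\Sigma_j \asymp \int_{2}^{j}\frac{\gamma(t)}{t}\,dt$ and substitute $u=\log t$ to reduce it to $\int_{1}^{\log j}\gamma(e^{u})\,du$. Convergence of $\sum_k \gamma(k)/k$ then corresponds to $\gamma = O((\log j)^{-1-\delta})$ with $\delta>0$ (totally convergent subcase); the exactly critical profile $\gamma(j)=\eta(\log j)^{-1}$ gives $\Sigma_j \sim \eta\log\log j$ and hence $S_j \sim (\log j)^{\eta}$; and otherwise the series diverges with $\Sigma_j \asymp \gamma(j)\log j$, giving $S_j = \exp(\gamma(j)\log j)$. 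The subtlety is that the leading constant in $\Sigma_j \sim \gamma(j)\log j$ is genuinely sensitive to $\gamma$ (e.g.\ $\gamma(t)=\log t$ produces a factor $\tfrac12$), so the second displayed line holds with the stated constant only under a mild additional regularity assumption on $\gamma$, which I would make explicit.

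Finally, with the asymptotics of $\Sigma_j$ established, the regime classification follows from elementary limits. Computing $L_j = \Sigma_j/j \sim \frac{j^{p}\gamma(j)}{p+1}$ I would read off the trichotomy: $L_j\to 0$ (shrinking) exactly when $p<0$ for any $\gamma$, or $p=0$ with $\gamma\to0$; $L_j\to c\in(0,\infty)$ (stable) exactly when $p=0$ and $\gamma\to c$; and $L_j\to\infty$ (spreading) when $p>0$ for any $\gamma$, or $p=0$ with $\gamma\to\infty$. The split of the shrinking regime into totally convergent versus divergent scales is precisely the convergence dichotomy for $\sum_k\gamma(k)k^{p}$ settled above, and each divergent sub-formula for $S_j$ is the corresponding specialization: $p\in(-1,0)$ from the $p>-1$ asymptotics, the $p=-1$ profiles from the borderline analysis, and the $p=0$, $\gamma\to0$ case from $\Sigma_j\sim j\gamma(j)$.
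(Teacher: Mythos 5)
Your proposal is correct and follows essentially the same route as the paper's proof: both reduce the statement to Karamata-type asymptotics of the partial sums $\Sigma_j=\sum_{k=0}^{j-1}\varepsilon_k$ and then read off the shrinking/stable/spreading trichotomy from the limits of $L_j=\Sigma_j/j$, exactly as you do. The difference is one of rigor rather than strategy: the paper simply asserts the three asymptotic forms of $\Sigma_j$, whereas you justify them (Karamata summation for $p>-1$, comparison for $p<-1$, the $u=\log t$ integral substitution for $p=-1$), and your two caveats are genuine defects of the paper's formulation that its proof does not address --- for $p<-1$ the first display line can only describe the ratio $S_j/S_\infty$ (as stated it would force $S_j\to 1$, contradicting $S_j\to e^{E}$), and for $p=-1$ the leading constant in $\Sigma_j\sim\gamma(j)\log j$ is indeed $\gamma$-dependent (your example $\gamma(t)=\log t$ yields a factor $1/2$), so the second display line requires the extra regularity hypothesis you propose.
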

The proof is again available in Section \ref{sec:proofs}.\\
Even if in both the cases the relative bandwidth ratio - proportional to the shift - vanishes asymptotically, If we consider the total convergence case, beyond a certain index $j$, the scales $\lbrace S_j : j\geq 1\rbrace$ stabilize to a finite value. It means that after some finite index $J_\infty$, the cumulative shift has essentially saturated, so all subsequent scales satisfy $S_j \approx S_{\infty}$.  This stability recovers the classical harmonic analysis setting, since dilation parameters become effectively constant at high frequencies. More of interest, the divergent dilation case allows one to control the growth of $S_j$.   In practical applications, this unbounded but controlled growth grants extra flexibility: by tuning $p$ and the and the slowly varying component $\gamma(j)$, one can finely balance spectral concentration against spatial localization. Here we present a partial list of divergent subexponential dilations sorted by increasing power of growth.
		\begin{example}\label{cor:subregimes}
			In the shrinking regime, we can identify the following subclasses where all the shifts are regularly varying sequences of index $-p$.
		\begin{itemize} 
			\item \textit{Logarithmic scales.} We take $p=-1$ and $\gamma(j)=\eta (\log j)^{-1}$, with $\eta > 0$. In this case, the shifts are of the form 
			\begin{equation*}
				\varepsilon_j \sim \frac{\eta}{j\log j},
			\end{equation*}
			which gives:
			\begin{equation*}
			S_j = (\log j)^\eta,
			\end{equation*}
			with
			\begin{equation*}
					 \Delta_j = 2\left( \frac{\eta}{j \log j}\right) + o \left(\frac{1}{j\log j}\right).
			\end{equation*}
			This case includes also modification of $\varepsilon_j$ of the type $\frac{\delta}{j\log j \log \log j } $.
			\item \textit{Polynomial scales.} We take $p=-1$ and $\gamma(j)=\eta$, for $\eta>0$. In this case,  we have that 
			\begin{equation*}
				\varepsilon_j \sim \frac{\eta}{j},
			\end{equation*}	
			yielding:
			\begin{equation*}
			S_j =  j^{\eta}, 
			\end{equation*}
			with
			\begin{equation*}
					 \Delta_j = \frac{2\eta}{j }+ O\left(j^{-3}\right). 
			\end{equation*}
			This includes also log-polynomial scales, where for $\eta_1,\eta_2>0$, we have that 
				$\varepsilon_j \sim \frac{\eta_1}{j}+\frac{\eta_2}{j \log j} + o(j \log j).$
				\item \textit{Log-Power Exponential scales.} We take $p=-1$ and $\gamma(j)=\eta q \left(\log j\right)^{q-1}$, with $\eta>0$ and $q \in (0,1)$. In this case,  
			\begin{equation*}
				\varepsilon_j \sim \eta q \frac{\left(\log j\right)^{q-1}}{j}, 
			\end{equation*}	
			leading to:
			\begin{equation*}
				S_j =  \exp\left(\eta \left(\log j\right)^{q}\right), 
			\end{equation*}
			with
			\begin{equation*}
				\Delta_j = 2\eta q \frac{\left(\log j\right)^{q-1}}{j} + o(\frac{\left(\log j\right)^p}{j}).
			\end{equation*}
	\item \textit{Stretched (or mild) exponential scales.} For $0 < p < 1$ and $\gamma(j)= \eta$, $\eta>0$, we have
	\begin{equation*}
		\varepsilon_j = \eta j^{-p} + o(j^{-p}),
		\end{equation*} 
		so that 
		\begin{equation*}
			S_j = \exp \left( \frac{\eta j^{1-p}}{1-p} \right),
			\end{equation*}
			where
			\begin{equation*}
					 \Delta_j = 2\eta(1-p)j^{-p} + o\left(j^{-p}\right), \\
			\end{equation*}
					\end{itemize}
\end{example}
	All the cases in Example \ref{cor:subregimes} are summarized in Table \ref{tab:shrink_regimes}, while Figure \ref{fig:combined} offers a graphical illustration of the behaviour of shifts and scaling centers.
	\begin{figure}[htbp]
	\begin{subfigure}[b]{0.48\textwidth}
		\centering
		\includegraphics[width=\textwidth]{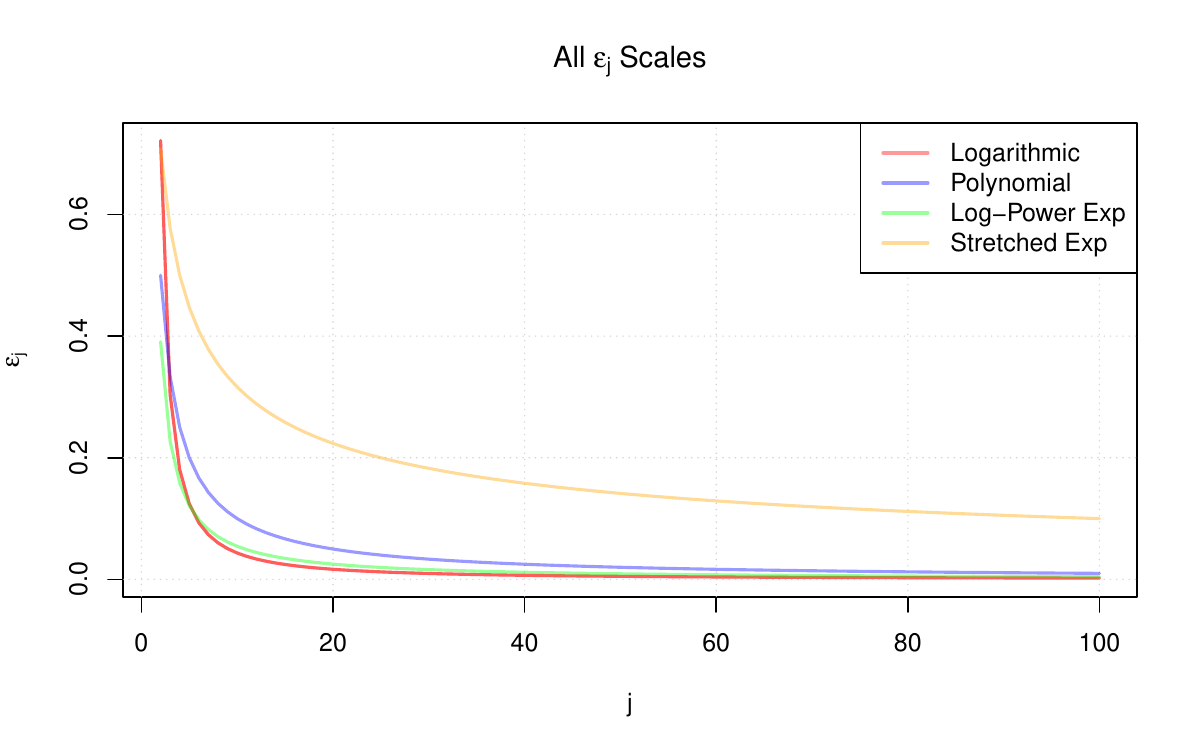}
		\caption{Decay of $\varepsilon_j$ in various regimes.}
		\label{fig:epsilon}
	\end{subfigure}
	\hfill
	\begin{subfigure}[b]{0.48\textwidth}
		\centering
		\includegraphics[width=\textwidth]{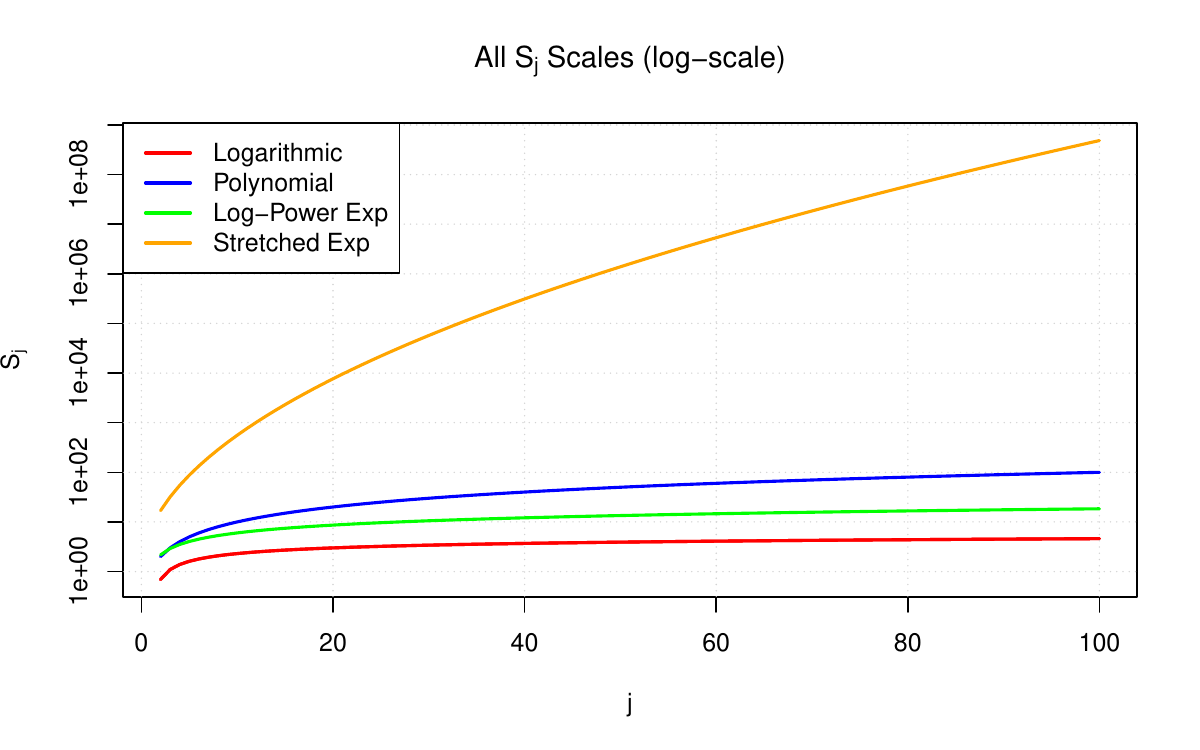}
		\caption{Growth of $S_j$ in log scale.}
		\label{fig:Sj}
	\end{subfigure}
	
	\caption{Comparison of shift $\varepsilon_j$ and scale $S_j$ across different asymptotic regimes. Each curve corresponds to a specific growth for the shifts
				(a) Logarithmic case \(h_j=1+\eta/j\log j\) with \(\eta=1\).
				(b) Polynomial case \(h_j=1+\eta/j\) with \(\eta=1\).
				(c) Log-power exponential case \(h_j=1+\eta q (\log j)^{q-1}j^{-1}\) with \(q=0.7, \eta=1\).  
	 			(d) Mild exponential case \(h_j=1+j^{-p}\) with \(p=0.5\).  }
	\label{fig:combined}
	\end{figure}

%

\begin{table}[h!]
	\centering
	\begin{tabular}{|c|c|c|c|}
		\hline
		\textbf{Regime} & $\varepsilon_j$&$S_j$  & $\Delta_j$ \\
		\hline
		\textit{Logarithmic} & $\frac{\eta}{j\log j}$ &$ \log ^{\eta} j $ & $ \frac{2\eta}{j \log j}$ \\
		\hline
		\textit{Polynomial} & $\frac{\eta}{j}$ & $ j^\eta $ & $\frac{2\eta}{j}$ \\
 		\hline
 		\textit{Log-Power Exponential} & $\eta q \frac{\left(\log j\right)^{q-1}}{j} $ &$\exp\left(\eta \left(\log j\right)^{q}\right)$ &  $2\eta q \frac{\left(\log j\right)^{q-1}}{j}  $ \\
 		\hline
		\textit{Mild Exponential} & $\eta j^{-p}$ &$\exp\left(\eta \frac{j^{1-p}}{1-p}\right)$ &  $2\eta\left(1-p\right)j^{-p}$ \\
		\hline
	\end{tabular}
		\vspace{0.5cm}
	\caption{Summary of dilation parameters in shrinking regimes as $j\rightarrow \infty$ (only leading terms).}
	\label{tab:shrink_regimes}
\end{table}

The next corollary, whose proof is in Section \ref{sec:proofs} provides a quantitative decay rate for shrinking needlets depending on the structure of the shifts $\lbrace \varepsilon_j : j \geq 1\rbrace$.
\begin{corollary}\label{cor:shrinkloc}
	In the shrinking regime, under the assumptions of Condition \ref{cond_epsilon} assuming $p \in \left[\left.-1,0\right)\right.$, the spatial localization bounds becomes as follows. For any $M \in \mathbb{N}$, there exists a positive constant $C_M$ such that
	\begin{equation*}
		\begin{split}
	\left \vert	\psi_{j,k} (x) \right \vert 
		&	\leq\frac{4 C_M  	\Sigma_{j;p}}{\left(1+	\Sigma_{j;p}\Theta_{j,k}\right)^{M}} ,\end{split}
	\end{equation*}  
	where
	\begin{equation}\label{eqn:locfunct}
		\Sigma_{j;p} = \begin{cases} \gamma(j) j^p \exp\left( \frac{j^{p+1}\gamma(j)}{p+1}\right) & p \in (-1,0)\\ 
			\gamma(j) \exp\left( \log j \gamma(j)\right) & p =-1, \quad \gamma(j) \geq O \left(\left( \log j\right)^{-1+\delta}\right)\\
			\eta \left(\log j \right)^{\eta-1}& p =-1, \quad \gamma(j) = \frac{\eta}{\log j}, \quad \eta>1\\
				\exp\left( j \gamma(j) \right)& p =0, \quad \gamma(j): \underset{j \rightarrow \infty}{\lim} \gamma(j)  =0.
			\end{cases}.
	\end{equation}
\end{corollary}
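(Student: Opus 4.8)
The plan is to treat Corollary \ref{cor:shrinkloc} as a substitution result: I start from the shrinking-regime estimate of Theorem \ref{thm:local},
\begin{equation*}
\left\vert \psi_{j,k}(x)\right\vert \leq C_M\left(\frac{S_{j+1}^2-S_{j-1}^2}{S_{j-1}}\right)\frac{1}{\left(1+(S_j-S_{j-1})\Theta_{j,k}\right)^M},
\end{equation*}
and reduce \emph{both} the amplitude prefactor and the localization length to the single effective scale $\Sigma_{j;p}$ by inserting the asymptotics of $S_{j-1},S_j,S_{j+1}$ furnished by Condition \ref{cond_epsilon} and Theorem \ref{thm:regimes}. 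The unifying device is the representation $S_{j+1}=S_je^{\varepsilon_j}$ and $S_{j-1}=S_je^{-\varepsilon_{j-1}}$ already used in Section \ref{sec:dila}; because we are in the shrinking regime, $\varepsilon_j=\gamma(j)j^p\to0$, so every exponential may be Taylor-expanded to first order with a controlled remainder.

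First I would expand the amplitude. The factorization gives
\begin{equation*}
\frac{S_{j+1}^2-S_{j-1}^2}{S_{j-1}}=S_j\left(e^{2\varepsilon_j+\varepsilon_{j-1}}-e^{-\varepsilon_{j-1}}\right)=2S_j(\varepsilon_j+\varepsilon_{j-1})+O\!\left(S_j\varepsilon_j^2\right).
\end{equation*}
Since $\gamma$ is slowly varying, the uniform convergence theorem for slowly varying functions yields $\varepsilon_{j-1}/\varepsilon_j\to1$ (as $(1-1/j)^p\to1$ and $\gamma(j-1)/\gamma(j)\to1$), so the amplitude is $\leq 4(1+o(1))\,S_j\varepsilon_j$. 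The same factorization gives $S_j-S_{j-1}=S_{j-1}(e^{\varepsilon_{j-1}}-1)=(1+o(1))\,S_j\varepsilon_j$. Defining $\Sigma_{j;p}:=S_j\varepsilon_j$, both the prefactor (up to the factor $4$) and the inverse localization length are therefore comparable to $\Sigma_{j;p}$.

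Next I would convert these comparabilities into the stated closed bound. The elementary fact that $A_j\geq\tfrac12\Sigma_{j;p}$ for large $j$ forces $1+A_j\Theta\geq\tfrac12(1+\Sigma_{j;p}\Theta)$ uniformly in $\Theta\in[0,\pi]$, hence $(1+A_j\Theta)^{-M}\leq 2^M(1+\Sigma_{j;p}\Theta)^{-M}$; applying this with $A_j=S_j-S_{j-1}$ and absorbing the $2^M$ and the $(1+o(1))$ factors into $C_M$ produces the target form $4C_M\,\Sigma_{j;p}\,(1+\Sigma_{j;p}\Theta_{j,k})^{-M}$. It then remains to read off $\Sigma_{j;p}=S_j\varepsilon_j$ explicitly: substituting the expressions for $S_j$ supplied by Theorem \ref{thm:regimes} in the subregimes $p\in(-1,0)$, $p=-1$ with $\gamma(j)\geq O((\log j)^{-1+\delta})$, and $p=-1$ with $\gamma(j)=\eta(\log j)^{-1}$ (the $p=0$ boundary being handled identically) yields precisely the branches of \eqref{eqn:locfunct}.

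The hard part will be the borderline $p=-1$ analysis rather than the expansions themselves: to pin down $S_j=\exp(\sum_{k<j}\gamma(k)/k)$ one must evaluate the partial sums of a slowly varying sequence, which is sensitive to whether $\gamma$ decays slower or faster than $(\log j)^{-1}$ and requires the generalized Karamata theorem recalled after Definition \ref{def:karamata}; this is exactly the trichotomy already recorded in Theorem \ref{thm:regimes}, so I would simply invoke it. A secondary, purely technical, point is to make the first-order expansions uniform in $j$—bounding the remainders $O(S_j\varepsilon_j^2)$ and the ratio $\varepsilon_{j-1}/\varepsilon_j$ uniformly—so that the constant is genuinely $4C_M$ for all sufficiently large $j$.
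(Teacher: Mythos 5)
Your proposal is correct and follows essentially the same route as the paper's own proof: both start from the shrinking-regime bound of Theorem \ref{thm:local}, write $S_{j+1}=S_je^{\varepsilon_j}$, $S_{j-1}=S_je^{-\varepsilon_{j-1}}$, Taylor-expand using $\varepsilon_j=\gamma(j)j^p\to 0$ together with the slow variation of $\gamma$ (the paper uses $(j-1)^p=j^p\left(1-\tfrac{p}{j}+o\left(\tfrac{1}{j}\right)\right)$ where you use $\varepsilon_{j-1}/\varepsilon_j\to1$), identify the effective scale $\Sigma_{j;p}=\gamma(j)j^pS_j$ in both the amplitude (with the factor $4$) and the localization length, and finally substitute the expressions for $S_j$ from Theorem \ref{thm:regimes}. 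Your treatment is in fact slightly more careful than the paper's at the final step, where you make explicit the absorption of the $2^M$ and $(1+o(1))$ factors into the constant, a point the paper leaves implicit.
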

	The bound \eqref{eqn:locfunct} in Corollary \ref{cor:shrinkloc} captures a key feature of the shrinking regime. For any fixed angular distance $\Theta_{j,k}>0$, choosing $M$ arbitrarily large yields decay of order $\Sigma_{j;p}^{\,1-M}$, that is, faster than any inverse polynomial in $\Sigma_{j;p}$.  Thus, despite the shrinking frequency window at each step, the overall construction enjoys a still very powerful  localization and shrinking needlets are highly concentrated around their cubature nodes.\\
	As far as the total convergence is discussed, the spatial localization degrades. Indeed, the localization bound stabilizes to a constant, and loses the stretched‐exponential decay. Instead, for large $j$ 
	\[
	\left \vert \psi_{j,k}\right \vert \leq \frac{C_M}{\left(1+S_{\infty}\Theta_{j,k}\right)^M},
	\] 
	which decays only as a polynomial in the angular distance. In practical terms, when $p<-1$, the needlet construction achieves maximal spatial localization only up to a finite bandwidth, beyond which no further refinement is possible.\\
	As in the case of standard and Mexican needlets (see \cite{dur,npw2}), we determine the asymptotic behavior of the $L^p$-norms of flexible bandwidth needlets as follows.
\begin{corollary}\label{prop:norm} 
	For $q\in \left[ 1,+\infty \right] $,
	\begin{equation*}
		\left\Vert \psi _{j,k}\right\Vert _{L^{q}\left( \mathbb{S}^{2}\right)
		}=\left( \int_{\mathbb{S}^{2}}{\left\vert \psi _{j,k}\left( x\right)
			\right\vert ^{q}dx}\right) ^{\frac{1}{q}}\approx \Sigma_{j;p}^{2\left( \frac{1}{2}%
			-\frac{1}{q}\right) }
		.  \label{eq:pnorm}
		\end{equation*}
	\end{corollary}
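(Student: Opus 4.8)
The plan is to obtain both estimates from the pointwise localization bound of Corollary~\ref{cor:shrinkloc}, which collapses the computation of $\|\psi_{j,k}\|_{L^q(\mathbb{S}^2)}$ to a single radial integral governed entirely by the concentration scale $\Sigma_{j;p}$ of \eqref{eqn:locfunct}. The endpoint $q=\infty$ is immediate, since the exponent $2(1/2-1/q)$ degenerates to $1$ and $\|\psi_{j,k}\|_{L^\infty}\approx\Sigma_{j;p}$ is read off from the bound at $\Theta_{j,k}=0$ together with the peak estimate below; for $q\in[1,\infty)$ I would establish $\|\psi_{j,k}\|_{L^q}^q\approx\Sigma_{j;p}^{\,q-2}$ and take $q$-th roots. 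The conclusion is uniform in $k$, since the cubature weights $\lambda_{j,k}\approx S_{j-1}^{-2}$ are mutually comparable, and uniform across the four subregimes of \eqref{eqn:locfunct}, since only the magnitude of $\Sigma_{j;p}$ and not the specific form of $p$ and $\gamma$ enters.

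For the upper bound I would raise the estimate of Corollary~\ref{cor:shrinkloc} to the $q$-th power and integrate over the sphere in geodesic polar coordinates centred at $\xi_{j,k}$, where the area element is $\sin\Theta\,d\Theta\,d\varphi$, obtaining
\[
\|\psi_{j,k}\|_{L^q}^q\;\lesssim\;\Sigma_{j;p}^{\,q}\int_0^{\pi}\frac{\sin\Theta}{\left(1+\Sigma_{j;p}\Theta\right)^{Mq}}\,d\Theta .
\]
The substitution $u=\Sigma_{j;p}\Theta$ with $\sin\Theta\le\Theta$ turns the radial factor into $\Sigma_{j;p}^{-2}\int_0^{\infty}u\,(1+u)^{-Mq}\,du$, and the remaining integral converges because the hypothesis $M>2$ forces $Mq>2$ for every $q\ge1$. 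This yields $\|\psi_{j,k}\|_{L^q}^q\lesssim\Sigma_{j;p}^{\,q-2}$, i.e.\ $\|\psi_{j,k}\|_{L^q}\lesssim\Sigma_{j;p}^{2(1/2-1/q)}$, valid on the full range $q\in[1,\infty]$.

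The reverse inequality is the substantive part, and its natural starting point is the peak value. Evaluating at $x=\xi_{j,k}$, where $Z_\ell(1)=(2\ell+1)/4\pi$, gives $\psi_{j,k}(\xi_{j,k})=\sqrt{\lambda_{j,k}}\sum_{\ell}b_j(\ell)(2\ell+1)/4\pi$; using $b_j(S_j)=1$ on the support $[S_{j-1},S_{j+1}]$, the width identity $S_{j+1}-S_{j-1}\approx\Sigma_{j;p}$, and $\lambda_{j,k}\approx S_{j-1}^{-2}$, a direct estimate gives $\psi_{j,k}(\xi_{j,k})\approx\Sigma_{j;p}$, matching the $L^\infty$ bound. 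One would then like to propagate this to a cap $\{\Theta_{j,k}\le c_0/\Sigma_{j;p}\}$, of measure $\approx\Sigma_{j;p}^{-2}$, on which $|\psi_{j,k}|\gtrsim\Sigma_{j;p}$, so that integrating reproduces $\|\psi_{j,k}\|_{L^q}^q\gtrsim\Sigma_{j;p}^{\,q-2}$ and closes the two-sided estimate.

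The hard part will be exactly this propagation, and it is genuinely more delicate here than for standard needlets. In the stable regime the carrier scale $S_j^{-1}$ on which the kernel $\sum_\ell b_j(\ell)Z_\ell(\cos\Theta)$ oscillates coincides with the envelope scale $\Sigma_{j;p}^{-1}$ of Corollary~\ref{cor:shrinkloc}, so the near-diagonal concentration lemma of \cite{npw2,dur} applies verbatim and the pointwise bound holds on the whole cap. In the shrinking regime, by contrast, $\Sigma_{j;p}=S_j\varepsilon_j\ll S_j$, hence $S_j^{-1}\ll\Sigma_{j;p}^{-1}$, and the wave packet oscillates $\sim\varepsilon_j^{-1}$ times within its own envelope; consequently a uniform lower bound of size $\Sigma_{j;p}$ cannot hold across the entire cap, and the envelope of Corollary~\ref{cor:shrinkloc} overestimates $|\psi_{j,k}|$ away from the individual carrier crests. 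Recovering the matching lower bound therefore requires a finer, stationary-phase analysis of $\sum_\ell b_j(\ell)P_\ell(\cos\Theta)$ that exploits the window smoothness through the derivative estimates $|b_j^{(n)}(u)|\le C_n(S_j-S_{j-1})^{-n}$ and controls the $L^q$ mass of the oscillatory factor by averaging over each carrier period rather than pointwise. Making this averaging rigorous, and checking that it returns the stated power $\Sigma_{j;p}^{\,q-2}$ uniformly over the four subregimes of \eqref{eqn:locfunct}, is the crux of the argument.
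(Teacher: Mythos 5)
Your upper bound is fine, and it is essentially all the paper itself offers: the proof of Corollary \ref{prop:norm} is omitted there, with a pointer to the standard arguments of \cite{npw2,dur}. The genuine gap is the lower bound, which you correctly isolate as the crux but leave open, conjecturing that a stationary-phase/averaging analysis will ``return the stated power''. It will not, because the statement itself fails for finite $q$ in the shrinking regime, so no refinement of your cap-propagation step can close the argument. The cheapest check --- which your proposal never performs --- is $q=2$, where no asymptotics are needed: by the addition formula and the orthogonality of the projectors $Z_\ell$,
\begin{equation*}
\left\Vert \psi_{j,k}\right\Vert_{L^{2}\left(\mathbb{S}^{2}\right)}^{2}
=\lambda_{j,k}\sum_{\ell}b_{j}^{2}(\ell)\,\frac{2\ell+1}{4\pi}
\leq c\,\frac{\left(S_{j+1}-S_{j-1}+1\right)\left(2S_{j+1}+1\right)}{4\pi\, S_{j-1}^{2}}
\approx \Delta_{j}+S_{j}^{-1}\longrightarrow 0,
\end{equation*}
using only $0\leq b_{j}^{2}\leq 1$, the support condition, and \eqref{eqn:cubature}; the corollary instead claims $\Vert\psi_{j,k}\Vert_{L^{2}}\approx\Sigma_{j;p}^{0}=1$. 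By H\"older, $\int\vert\psi_{j,k}\vert^{q}\leq\Vert\psi_{j,k}\Vert_{\infty}^{q-2}\Vert\psi_{j,k}\Vert_{2}^{2}$, so the claimed lower bound fails for every $q\in\left[2,\infty\right)$, each time by a factor $\varepsilon_{j}=\Sigma_{j;p}/S_{j}\rightarrow 0$; only the endpoint $q=\infty$ of the corollary survives, since the peak value is indeed $\approx\Sigma_{j;p}$.

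The reason your averaging heuristic points the wrong way is that oscillation is not the only feature hidden by the envelope of Corollary \ref{cor:shrinkloc}. By Hilb's asymptotics, $P_{\ell}(\cos\theta)\sim\sqrt{2/(\pi\ell\sin\theta)}\,\cos\left(\left(\ell+\tfrac12\right)\theta-\tfrac{\pi}{4}\right)$, so on the bulk of the cap $S_{j}^{-1}\lesssim\theta\lesssim\Sigma_{j;p}^{-1}$ the needlet has amplitude $\approx\Sigma_{j;p}\left(S_{j}\theta\right)^{-1/2}$ on top of the carrier $\cos\left(S_{j}\theta+\phi\right)$: averaging $\vert\cos\vert^{q}$ over carrier periods only produces constants, but this extra polynomial amplitude decay changes the power. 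Carrying out your program with the corrected amplitude gives $\Vert\psi_{j,k}\Vert_{L^{q}}^{q}\approx\Sigma_{j;p}^{3q/2-2}S_{j}^{-q/2}$ for $1\leq q<4$ and $\approx\Sigma_{j;p}^{q}S_{j}^{-2}$ for $q>4$, consistent with the exact $q=2$ value above and with the $q=\infty$ endpoint. Note also that the literature argument the paper invokes bootstraps its lower bounds from $\Vert\psi_{j,k}\Vert_{L^{2}}\approx 1$, an identity that holds precisely when the carrier and envelope scales coincide, i.e.\ $\Sigma_{j;p}\approx S_{j}$, which is the standard regime; that hypothesis is exactly what the shrinking regime destroys, and any corrected version of Corollary \ref{prop:norm} must involve both scales $\Sigma_{j;p}$ and $S_{j}$, not $\Sigma_{j;p}$ alone.
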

	Unlike the standard case where the bounds depend primarily on the dyadic scaling $B^j$, here the estimate mirrors the specific functional structure of the sequence $S_j$, which governs the flexible localization in frequency. This result follows from the localization property and the tight frame structure of the flexible bandwidth needlet system. The proof is analogous to that for standard and Mexican needlets in \cite{dur,npw2} and is therefore omitted here.\\

\subsubsection{Some comments on the standard and the spreading cases}
Let us consider preliminarily the standard case, drawing some links between our results and the existing literature. The dilation factors approach here a constant $c>1$, and each new scale is a fixed as a multiple of the previous. Equivalently,
\begin{equation*}
	\varepsilon_j = h_j -1 \underset{j \rightarrow \infty}{\longrightarrow} c-1 >0.
\end{equation*}
It implies that for large $j$ the perturbations are essentially constant, that is, $\epsilon_j $ is slowly varying ($p=0$), with $\gamma$ such that $\lim_{j \rightarrow \infty} \gamma(j) = \eta$. 
 In this case, the qualitative behavior of the scaling $S_j$ is exponential, since
\begin{equation*}
	\sum_{k=0}^{j-1}\varepsilon_j \sim \eta j,
\end{equation*}
and hence 
\begin{equation*}
	S_j = \exp \left(\sum_{k=0}^{j-1}\varepsilon_k\right) =\exp \eta j, 
\end{equation*}
which is pure exponential growth. In the special case where $h_j=B$, $B>1$, exactly for every $j$, one recovers the familiar geometric progression $S_j=B^j$.
Allowing small, bounded deviations of $\varepsilon_j$  around $\eta$ only alters lower‐order terms inside the exponential without changing the overall rate. 
To conclude it is important to stress that the relative bandwidth ratio case take the values 
\begin{equation*}
	\Delta_j = 2\sinh \left(\eta\right),
\end{equation*}
less tied to the scaling shift than the shrinking case. 
In the spreading regime the dilation factor itself grows without bound, that is,
\begin{equation*}
	\varepsilon_j = h_j -1 \rightarrow \infty,
\end{equation*}
so that the increment from one scale to the next becomes arbitrarily large. As a result, the partial sums
$\sum_{k=0}^{j-1} \varepsilon_k$ grow faster than linearly in $j$ yielding superexponential scaling $S_j$. 
If for example the shift is regularly varying with any $gamma$ and $p>0$, or with $p=0$ and diverging $\gamma$, we build the \textit{Stretched-super-exponential growth,} where $\varepsilon_j \sim j^{p}$, with $p>0$, then $\sum_{k=0}^{j-1} \varepsilon_k\sim \frac{j^{p+1}}{p+1}$ and 
\begin{equation*}
	S_j \sim \exp \left( \frac{j^{p+1}}{p+1} \right).
\end{equation*}  
Here, for the relative bandwidth ratio and the scale width we compute 
\begin{equation*}
		\Delta_j = 2 \exp\left(\frac{p\left(p-1\right)}{2}j^{p-2}\right) \sinh\left(pj^{p-1}\right)\left(1+o(j)\right).
\end{equation*}
It is important to remark that also rapidly varying shifts, that is, growing faster than any polynomials, lead significant spreading constructions, as for instance the \textit{double-exponential growth,} where $\varepsilon_j \sim aB^{j}$, with $a>0$ and $B>1$, and $\sum_{k=0}^{j-1} \varepsilon_k\sim a\frac{B^j-1}{B-1}$, while
\begin{equation*}
	S_j \sim \exp \left( \frac{a\left(B^{j}-1\right)}{\left(B-1\right)} \right).
\end{equation*} 
In needlet constructions this type of regime produces nearly disjoint support windows $\left[S_{j-1},S_{j+1}\right]$ at high levels, which can undermine both frame overlap and partition of unity properties unless the weight functions are carefully adjusted. Concerning the relative bandwidth ratio and the scale width we obtain
\begin{equation*}
\Delta_j = \exp\left(aB^j\right) \left(1+o\left(j\right)\right), 
\end{equation*}
exploding as $j$ grows to infinity.

\section{Scale Regimes for Optimal Correlation Decay}\label{sec:corr}
We now turn our attention to the high-frequency uncorrelation properties of needlet coefficients. Specifically, we investigate the behavior of correlations in the spherical random field described by equation \eqref{eqn:randomfield}, focusing on their evaluation through flexible-bandwidth needlet coefficients. 
We focus on needlets in the shrinking regime, corresponding to a setting where the bandwidth ratio $\Delta_j$ converges to 1 as $j\rightarrow \infty$. implying that the support windows for $b_j$ become increasingly narrow at high frequencies.
This regime is particularly advantageous when a sharper asymptotic control is needed. Indeed, narrower bands lead to tighter localization in harmonic space, which often translates into improved error bounds or convergence rates in high-frequency asymptotics. Also, This approach proves especially effective in situations where sparsity or compressibility is present. If the underlying signal or field exhibits rapid decay or is concentrated in specific frequency regions, shrinking bands can isolate and capture informative components more efficiently.
In the present work, as well as in more general statistical settings, shrinking bands help isolate finer scales, reducing cross-scale interference and allowing for better variance control and bias reduction in central limit theorems or estimation problems. Finally, his framework is especially advantageous when high-resolution data is available. When observing data at very fine angular resolution (e.g., cosmology, geophysics), the shrinking regime provides a natural multiscale refinement that can better exploit the structure of the signal.
In this paper, we relate previous results on asymptotic uncorrelation of needlet coefficients to the different regimes in the shrinking cases within the discrete needlet constructions with variable bandwidth. We analyze the decay of correlations across different frequency bands, under the regimes we have introduced, and provide a detailed asymptotic study of their behavior. This includes a comparison across polynomial and logarithmic growth scenarios for bandwidth functions, allowing us to evaluate how different choices impact the decay of correlations and the degree of localization in both spatial and frequency domains. Our results offer a comprehensive characterization of high-frequency decorrelation, tailored to a wide range of power spectrum behaviors and adaptable to practical needs in data analysis on the sphere. 
\subsection{Some background results on uncorrelation between needlet coefficients}
We operate here under the assumption that the angular power spectrum satisfies the regularity condition \ref{eqn:regularity}. Such scale-invariant behavior is a common assumption in theoretical models, particularly in idealized cosmological settings. However, in practical applications—such as the analysis of the Cosmic Microwave Background (CMB)—the angular power spectrum often displays more intricate structure, including oscillatory features superimposed on a decaying envelope (see, e.g., \cite{dmt24,planck1}). These oscillations arise from physical phenomena such as acoustic peaks in the early universe and can complicate the statistical behavior of needlet coefficients.\\
Under the assumptions in Condition \ref{cond:Cl}, it has been proved in \cite{dmt24} that for any $N \in \mathbb{N}$, as $j\rightarrow \infty$,%
\begin{equation*}
	\left \vert \operatorname{Corr} \left(\beta_{j,k},\beta_{j,k^\prime} \right)\right \vert \leq c_N \max \left(\frac{1}{\left(S_{j-1}^{1-\beta}\Theta_{j,k,k^\prime}\right)^{N}},\frac{1}{\left(\left(S_j-S_{j-1}\right)\Theta_{j,k,k^\prime}\right)^{N}}\right),
\end{equation*}
where $c_N$ depends only on $\alpha$ and $G$, but not on $j$, and where $\Theta_{j,k,k^\prime}$ is the great circle distance between $\xi_{j,k}$ and $\xi_{j,k^{\prime}}$. Note that the minimum distance between cubature points as the resolution level $j \rightarrow \infty$ goes as 
\begin{equation}\label{eqn:dS}
	d_j = \min_{k\neq k^{\prime}=1, \ldots,K_j} \Theta_{j,k,k^\prime} \simeq S_{j-1}^{-1}. 
\end{equation}
see again \cite{dmt24}. 
 The relation \eqref{eqn:dS} means that the angular distance $\Theta_{j;k,k^{\prime}}$ between two needlet centers $\xi_{j,k}$ and $\xi_{j,k^{\prime}}$ scales like the inverse of the scale of the previous window support. 
This scaling reflects how needlets are approximately localized at a resolution controlled by $\min\left(S_{j-1}, S_{j}-S_{j-1}\right)$. As $j\rightarrow \infty$, both diverge so the spatial resolution improves and, consequently, the centers $\xi_{j,k}$ and $\xi_{j,k^\prime}$ are placed more densely, with separation of order $S_{j-1}$. Despite the window support being $\left[S_{j-1},S_{j+1}\right]$, the angular resolution is governed by the coarsest scale in the band $S_{j-1}$, even though higher frequencies are present up to $S_{j+1}$.\\
In \cite{dmt24}, the authors exploit nearly‐uncorrelated needlet coefficients to build a powerful goodness–of–fit test for Gaussian isotropic fields on the sphere. The idea is as follows: at each resolution level $j$, only a subsample 
$D_j$ of cubature points is selected, so that any two sampled points are separated by at least $O\left( S_{j-1}^{1-\beta-\epsilon}\right)$, $\epsilon>0$. Under the condition $S_{j-1}^{1-\beta} \leq S_{j}-S_{j-1}$, the needlet coefficients $\beta_{j,k}$ and $\beta_{j,k^\prime}$ satisfy
\begin{equation*}
\operatorname{Corr} \left(\beta_{j,k}\beta_{j,k^\prime}\right)
\leq C_M  S_{j-1}^{1-\beta} \Theta_{j;k,k^\prime}^{-M}
\end{equation*}
so that, on the one hand, the variance of the (normalized) test statistic 
$$I_j = \frac{1}{\operatorname{card}\left(D_j\right)}
\sum_{k \in D_j}\left( \beta_{j,k}^2 -1\right)$$ converges to 1, and, on the other hand, its fourth cumulant vanishes at rate $O\left(\operatorname{card}\left(D_j\right)^{-1}\right)$. By the Malliavin–Stein method this yields a quantitative Central Limit Theorem for 
$I_j$, and hence a valid, high‐resolution goodness–of–fit procedure.
\subsection{Needlet uncorrelation and shrinking growth}
In this section we establish which shrinking regime choices of scaling centers $\lbrace S_j: j \geq 1\rbrace$ satisfy that lower bound on 
$S_j-S_{j-1}$, thereby guaranteeing the uncorrelation rates needed for these tests to remain effective when the needlet bandwidth varies, under the assumptions regular variations on the dilation parameter by means of its shift.\\
 This asymptotic regime guarantees sufficient concentration of the needlet window functions in both pixel and frequency domains, while preserving a non-trivial geometric separation between adjacent frequency bands. 
More in detail, if Condition \eqref{cond_epsilon} holds, the growth of the scaling center is controlled by the regularity index $p$. In what follows, we will examine for which choices of $p$ we have that 
\begin{equation}
R_{j}\left(\beta\right)=\frac{S_j-S_{j-1}}{S_{j-1}^{1-\beta}} > 1, \label{eqn:corrbeta}
\end{equation}
so that denominator compensates for overall scale growth at the numeratori in a way that depends on the smoothness index $\beta$. 
Indeed, to achieve both sharp frequency localization (minimizing bias) and rapid decay of coefficient correlations (controlling variance), it is crucial that successive needlet bands separate at a rate tuned to the power‐spectrum smoothness.  Concretely, imposing $S_j^{1-\beta} < S_j - S_{j-1}$ as  $j \to \infty $ ensures that the “one‐step” increment in scale outpaces the natural smoothness scale \(S_j^{1-\beta}\). This condition yields optimal spatial decorrelation bounds while preserving narrow bandwidths, which directly benefits the power and reliability of needlet‐based goodness‐of‐fit procedures.
\begin{theorem}\label{thm:corr}
Under the assumptions of Conditions \ref{cond:Cl} and \ref{cond_epsilon}, where 
\[
S_j = \begin{cases} \exp\left(\gamma(j)j\right) & p=0, \lim_{j \rightarrow0} \gamma(j) = 0\\
	\exp\left(\gamma(j)\frac{j^{p+1}}{p+1}\right) & p\in(-1,0),\\
j^{\gamma(j)}& p=-1, \gamma(j) = O\left(\left(\log j\right)^{-1+\delta}\right)\\
\left( \log j\right)^\eta& p=-1, \gamma(j) = \frac{\eta}{\log j}(1 +o(j)),
	\end{cases}
\]
with $\gamma$ slowly varying,
Condition \eqref{eqn:corrbeta} is satisfied by the following regimes within the shrinking framework in the divergent subexponential growth:
	\begin{itemize}
		\item$ p\in(-1,0]$; 
		\item $p = -1 $ and $\lim_{j \rightarrow \infty} \gamma(j) = \gamma_{\infty} > \frac{1}{\beta}$.
	\end{itemize}
\end{theorem}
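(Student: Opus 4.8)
The plan is to collapse Condition \eqref{eqn:corrbeta} to a single scalar inequality and then verify it regime by regime. The key observation is the exact one-step factorization
\[
S_j - S_{j-1} = S_{j-1}\left(\frac{S_j}{S_{j-1}} - 1\right) = S_{j-1}\,\varepsilon_{j-1},
\]
which holds because $h_{j-1} = S_j/S_{j-1} = 1+\varepsilon_{j-1}$ by the definition of the shift sequence. Substituting this into the ratio $R_j(\beta)$ gives the clean identity
\[
R_j(\beta) = \frac{S_{j-1}\,\varepsilon_{j-1}}{S_{j-1}^{1-\beta}} = S_{j-1}^{\beta}\,\varepsilon_{j-1},
\]
so that \eqref{eqn:corrbeta} holds for all large $j$ as soon as $S_{j-1}^{\beta}\varepsilon_{j-1}\to\infty$. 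Passing to logarithms and using $\varepsilon_{j-1}=\gamma(j-1)(j-1)^p$ from Condition \ref{cond_epsilon}, the target quantity becomes
\[
\log R_j(\beta) = \beta\log S_{j-1} + \log\gamma(j-1) + p\log(j-1),
\]
and it remains to insert the asymptotics of $\log S_{j-1}$ from Theorem \ref{thm:regimes} and to locate the dominant term in each regime.

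For the first bullet, $p\in(-1,0]$, I would argue that the leading term $\beta\log S_{j-1}$ grows like a positive power of $j$ and therefore swamps the logarithmic corrections. Indeed, Theorem \ref{thm:regimes} gives $\log S_{j-1}\sim\gamma(j)\,j^{p+1}/(p+1)$ for $p\in(-1,0)$ and $\log S_{j-1}\sim\gamma(j)\,j$ for $p=0$; in both cases the exponent $p+1$ is strictly positive, so, since a slowly varying function multiplied by any positive power of $j$ diverges, one has $\log S_{j-1}\to+\infty$ at a polynomial rate. The correction $\log\gamma(j-1)+p\log(j-1)$ is only of logarithmic order and is thus dominated, whence $\log R_j(\beta)\to+\infty$ and $R_j(\beta)>1$ eventually. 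This settles $p\in(-1,0]$ uniformly, with no dependence on the precise value of $\beta$.

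The remaining and genuinely delicate case is $p=-1$, where the leading behaviour of $\log S_{j-1}$ is only logarithmic and the surviving terms are comparable. With $\gamma(j)\to\gamma_\infty$ one has $S_{j-1}\sim j^{\gamma_\infty}$, hence $\log S_{j-1}\sim\gamma_\infty\log j$, while $\log\varepsilon_{j-1}=\log\gamma(j-1)-\log(j-1)$, so that
\[
\log R_j(\beta) = \big(\beta\gamma_\infty-1\big)\log j + \log\gamma(j-1) + o(\log j).
\]
Therefore $\log R_j(\beta)\to+\infty$ precisely when the leading coefficient $\beta\gamma_\infty-1$ is strictly positive, i.e. $\gamma_\infty>1/\beta$, which is exactly the stated hypothesis. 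I expect this to be the main obstacle: unlike the $p\in(-1,0]$ regime, where a polynomially growing term automatically dominates, here the conclusion turns on a sharp comparison of two $\log j$-order quantities, so the strictness of $\gamma_\infty>1/\beta$ is indispensable and the threshold $\gamma_\infty=1/\beta$ must be excluded. The care in this step lies in controlling the slowly varying factor, using the Karamata property $\gamma(\tau j)/\gamma(j)\to1$ to replace $\gamma(j-1)$ by $\gamma_\infty$ and to confirm that the residual $\log\gamma(j-1)$ is $o(\log j)$ and cannot reverse the sign fixed by $\beta\gamma_\infty-1$.
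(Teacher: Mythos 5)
Your proof is correct and takes essentially the same route as the paper's: both reduce $R_j(\beta)$ to $\varepsilon_j\,S_{j-1}^{\beta}$ (the paper via a first-order Taylor expansion of $\exp(\varepsilon_j)$, you via the exact identity $h_{j-1}=1+\varepsilon_{j-1}$, an asymptotically immaterial difference), then pass to logarithms and compare $\beta\log S_{j-1}$ against $-p\log j-\log\gamma(j)$ regime by regime. In particular, your treatment of $p\in(-1,0]$ by domination of the polynomial-rate term and of $p=-1$ by the sharp comparison of the two $\log j$-order terms, yielding the threshold $\gamma_\infty>1/\beta$, mirrors the paper's argument exactly.
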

This theorem implies that, when studying random fields satisfying condition \ref{cond:Cl}, the optimal choices for the dilation sequence exclude most of those associated with the critical regularity value $p=-1$. Instead, it favors sequences $_j$ that continue to grow sufficiently fast to capture the terms that depend on $\beta$. In particular, this result rules out dilation sequences with slower growth rates, such as logarithmic growth and certain polynomial growths where the limiting exponent $\gamma_{\infty}$ satisfies $\gamma_{\infty}<1/\beta$. These slower-growing sequences shrink "too fast" and therefore fail to detect the relevant $\beta$-dependent asymptotic behavior. This distinction highlights the delicate balance required in choosing growth regimes that are neither too slow nor too fast to ensure effective detection of the underlying regularity features of the random fields under consideration.  
\section{Proofs}\label{sec:proofs}
This section contains the proofs of the results presented throughout the paper.
\begin{proof}[Theorem \ref{thm:regime}] 
	To prove our main results, we will relate relative bandwidth ratios and the characterizations of the growth classes in terms of the sequence 
	$\lbrace L_j: j \geq 1 \rbrace$, distinguishing between subexponential, exponential, and superexponential behavior. We treat each regime separately in the following analysis.
	\paragraph{Shrinking regime ($L_j \rightarrow 0  \Leftrightarrow S_j \rightarrow 0$).} 
	First, note that if $S_{j+1}>S_j$, using $S_j = \exp\left(jL_j\right)$, we have that
	\begin{equation}\label{eqn:DeltaL}
		\Delta_j 
		=\exp(A_j) -\exp(-A_{j-1}), 
	\end{equation}
	where $	A_j = \left(j+1\right)L_{j+1}-j L_j$
	If we assume $L_j \rightarrow 0$, then for any $\delta>0$, there exists $J_\delta>0$ such that $\left \vert L_{j} \right \vert<\delta_1$ and we want to show that for every $\epsilon_1>0$, there exists $J_{\epsilon_1}$ such that for all $j\geq J_{\epsilon_1}$, $\Delta_j \leq \epsilon_1$.  From the assumption, for $j >J_{\delta_1}$, it holds that 
	\begin{equation*}
		\left \vert L_{j+1}-L{j} \right \vert \leq 2\delta_1, \quad	\left \vert L_{j-1}-L{j} \right \vert \leq 2\delta_1
	\end{equation*}
	and thus 
	\begin{equation*}
		\begin{split}
			\left \vert A_j +A_{j-1} \right \vert & =   \left \vert (j+1) L_{j+1} -(j-1) L_{j-1} \right \vert\\
			& \leq  (j+1  )\left \vert L_{j+1} \right \vert +    (j-1  )\left \vert L_{j-1} \right \vert \\
			& \leq 2\delta_1 j.
		\end{split}
	\end{equation*}
	Expanding Equation \eqref{eqn:DeltaL} yields 
	\begin{equation*}
		\begin{split}
			\left \vert \Delta_j \right \vert
			& \leq 2j \delta_1 + \delta_1^2 o(j).
		\end{split}
	\end{equation*}
	For fixed $\epsilon_1>0$, we pick $\delta_1=\frac{\epsilon_1}{2 j}$, such that if for sufficiently large $, j$ $\left \vert L_j \right 	\vert\leq \epsilon_1 / 4j$ and 
	\begin{equation*}
		\left \vert \Delta_j \right \vert \leq \epsilon_1.
	\end{equation*} 
	Conversely, suppose for the sake of contradiction that  $\underset{j \rightarrow \infty}{\lim}\Delta_j =0$, but $\underset{j \rightarrow \infty}{\lim}L_j >0$. Then there exists  $\eta>0$ and an infinite sequence of indices $\lbrace j_q : q \in \mathbb{N}\rbrace$, where $j_1<j_2<\ldots$, such that
	$L_{j_q}\geq \eta$ for every  $q \in \mathbb{N}$. Thus,
	\begin{equation*}
		S_{j_q} \geq \exp\left(\eta j_q\right) = \epsilon_2^{j_q},
	\end{equation*}
	where $\epsilon_2=e^{\eta}$
	On the other hand, we know that for any $\delta_2>0$, there exists $J_{\delta_2}>0$ such that for $j \geq J_{\delta_2}$, $\Delta_j \leq \delta_2$ and thus 
	\begin{equation*}
		S_{j+1}-S_{j-1} \leq \delta_2 S_j. 
	\end{equation*}
	It implies that 
	\begin{equation*}
		S_{j+1}-S_{j} \leq S_{j+1}-S_{j-1}\leq \delta_2 S_j \rightarrow S_{j+1}\leq(1+\delta_2)S_j,
	\end{equation*}
	while
	\begin{equation*}
		S_{j}-S_{j-1} \leq S_{j+1}-S_{j-1}\leq \delta_2 S_j \rightarrow S_{j-1}\geq(1-\delta_2)S_j,
	\end{equation*}
	Iterating, we have that $S_j \leq S_{j_0} (1+\delta_2)^{j-j_0}$. We choose $j_0>J_{\delta_2}$ and we obtain
	\begin{equation*}
		\begin{split}
			L_j &= \frac{\log S_j}{j} \\
			& \leq \frac{\log S_{j_0}}{j} + \left(1-\frac{j_0}{j}\right)\log(1+\delta_2),
		\end{split}
	\end{equation*}
	thus 
	\begin{equation*}
		\underset{j \rightarrow \infty}{\limsup} L_j \leq \log \left(1+\delta_2\right). 
	\end{equation*}	
	Combining the two inequalities, we have
	\begin{equation*}
		\epsilon_2^{j_q}\leq S_{j_q} \leq S_{j_0} (1+\delta_2)^{j_q-j_0}
	\end{equation*} 
	yielding
	\begin{equation*}
		\left(\frac{\epsilon_2}{1+\delta_2}\right)^{j_q}\leq S_{j_q} \leq S_{j_0} (1+\delta_2)^{-j_0},
	\end{equation*} 
	where $S_{j_0} (1+\delta_2)^{-j_0}$ is a finite constant. This is true only if $\frac{\epsilon_2}{1+\delta_2}< 1$, but since $\delta_2$ can be chosen arbitrarily small, we reach a contradiction.
	\paragraph{Standard regime ($L_j \rightarrow c  \Leftrightarrow L_j \rightarrow c^\prime$).} 
	We assume that $c^\prime = \log B$, for $B>1$ and then $L_j \rightarrow \log B$. Hence, $ S_j \approx B^j$ and it is easy to note that 
	\begin{equation*}
		\Delta_j \rightarrow B-\frac{1}{B}>0. 
	\end{equation*} 
	Conversely, assuming that $ \Delta_j \rightarrow c$, we note that
	\begin{equation*}
		\Delta_j = \kappa_j - \frac{1}{\kappa_{j-1}}
	\end{equation*}
	where $\kappa_j =S_{j+1}/S_j$ to be bounded away from the unity in the limit.This implies that there exists $\epsilon_3 >0$ and $J_{\epsilon_3}$ such that for $j \geq J_{\epsilon_3}$, such that 
	$S_{j+1}/S_{j} \geq 1+\epsilon_3$. Thus we obtain recorsively that $S_j \geq S_{j_0}(1+\epsilon_3)^{j-j_0}$, with $j_0 >J_{\epsilon_3}$. Thus
	\begin{equation*}
		S_j \geq \delta_3 (1+\epsilon_3)^{j},
	\end{equation*}
	with $\delta_3 = S_{j_0}(1+\epsilon_3)^{-j_0}>0$, as claimed. Analogously, we can prove that $\Delta_j$ has to be bounded above, that is $\kappa_j+\kappa_{j-1}^{-1}< \epsilon_4$. This allows us to gain the same recursion formula leading to an analogous bound of the type
	\begin{equation*}
		S_j \leq \delta_4 (1+\epsilon_4^\prime)^{j}.
	\end{equation*}
	Here we omit the details for the sake of the brevity. 
	\paragraph{Spreading regime ($L_j \rightarrow \infty  \Leftrightarrow L_j \rightarrow \infty$).} 
	The assumption $ L_j \rightarrow \infty$ implies that for any $\delta_5>0$, there exists $J_{\delta_5}$ such that for any $j \geq J_{\delta_5}$,
	\begin{equation*}
		S_j >e^{\delta_5 j}.
	\end{equation*}
	Fixing $\delta_5=\epsilon_5^{-1}$, we obtain
	\begin{equation*}
		\frac{S_{j+1}}{S_j} > e^{\frac{1}{\epsilon_5}}; \quad   		\frac{S_{j-1}}{S_j} < e^{-\frac{1}{\epsilon_5}}.
	\end{equation*}
	It follows that 
	\begin{equation*}
		\Delta_j	\leq e^{\frac{1}{\epsilon_5}}+e^{-\frac{1}{\epsilon_5}} \rightarrow \infty, 
	\end{equation*}
	since $\underset{j \rightarrow \infty }{e^{\frac{1}{\epsilon_5}}}=\infty$, and $\underset{j \rightarrow \infty }{e^{-\frac{1}{\epsilon_5}}}=0$.\\
	Assume now that $ \Delta_j \rightarrow \infty$, that is, for any $\delta_6>0$, there exists $J_{\delta_6}>0$ such that for $j >J_{\delta_6}$, 
	\begin{equation*}
		\frac{S_{j+1}-S_{j-1}}{S_j} > \delta_6. 
	\end{equation*} 
	Set $\epsilon_6>0$; we fix $\delta_6= e^{\epsilon_6}$. Then, $S_{j+1}-S_{j-1}>e^{\epsilon_6}S_j$. Since $S_{j-1}>0$, then $S_{j+1}>e^{\epsilon_6}S_j$. Now, iterating this inequality and using $S_{J_{\epsilon_6}}\geq1$ 
	\begin{equation*}
		S_{{\epsilon_6}+q}\geq e^{q{\epsilon_6}}, \quad \text{ for }q\geq 0.
	\end{equation*}
	Fixing $j=J_{\epsilon_6}+q$ It implies that 
	\begin{equation*}
		S_{j}\geq e^{-J_{\epsilon_6}{\epsilon_6}}\left(e^{{\epsilon_6}}\right)^j, \quad \text{ for }q\geq 0.
	\end{equation*}
	Since ${\epsilon_6}$ can be chosen  arbitrarily, it follows that 
	\begin{equation*}
		L_j \geq {\epsilon_6} +o(j)\rightarrow {\epsilon_6}, 
	\end{equation*}
	which completes the argument.
\end{proof}
 \begin{proof}[Theorem \ref{thm:local}]
	Let us first focus on the shrinking regime. We have to show that $\underset{j\rightarrow \infty}{\lim}\Delta_j=0$, implies $S_{j}-S_{j-1}< S_j$. 
	Observe that 
	\begin{equation}\label{eqn:corprof}
		\Delta_j = \frac{S_{j+1}-S_{j}}{S_{j}} + \frac{S_{j}-S_{j-1}}{S_{j}}.	
	\end{equation} 	
	Both the addends are positive and since $\Delta_j \rightarrow 0$, thus $\frac{S_{j}-S_{j-1}}{S_{j}}$ converges to $0$. Thus, for any $\epsilon>0$, there exists $J_{\epsilon}>0$ such that for any $j \geq J_{\epsilon}$, 
	\begin{equation*}
		\frac{S_{j}-S_{j-1}}{S_{j}} <\epsilon.	
	\end{equation*} 	
	Setting $\epsilon=1/2$, we have that for sufficiently large $j$, $S_{j}-S_{j-1}<S_{j}/2$. On the other hand, we have also that
	\begin{equation*}
		\frac{S_{j-1}}{S_{j}} = 1 - \left(\frac{S_{j}-S_j-1}{S_j}\right)>\frac{1}{2},
	\end{equation*}
	when $\frac{S_{j}-S_j-1}{S_j}<\frac{1}{2}$, so that $S_{j-1}>\frac{S_j}{2}$. Combining the two inequalities proves our claim.\\
	Conversely, if $\underset{j\rightarrow \infty}{\lim}\Delta_j=\infty$, implies $S_{j}-S_{j-1}> S_j$. Consider Equation \eqref{eqn:corprof} and observe that $0 <\frac{S_{j}-S_{j-1}}{S_{j}}\leq 1$, and thus if $\Delta_j \rightarrow \infty$, necessarily $\underset{j\rightarrow \infty}{\lim}\frac{S_{j+1}-S_j}{S_j}=\infty$. Hence, for any $\delta>0$, there exists $J_\delta>0$ such that for any $j > J_{\delta}$,
	\begin{equation*}
		\frac{S_{j+1}-S_{j}}{S_{j}} >\delta.	
	\end{equation*} 	 
	Setting $\delta=1$ implies that for sufficiently large $j$, $S_{j+1}-S_{j}>S_j$. Rescaling the index completes this part of the proof. \\
	To conclude the proof, as already mentioned by \cite{dmt24}, just notice that in the standard regime, $S_j \approx B^j$, so that, while $S_{j-1}= B^j /B$, $S_{j}-S_{j-1}=B^{j}(1-1/B)$, that is, $S_{j}-S_{j-1}\approx S_{j-1}$, as claimed.
\end{proof}
	\begin{proof}[Theorem \ref{thm:regimes}] To prove the relations between regular varying shifts with and the related regimes, we first observe that under the assumptions of Condition \ref{cond_epsilon}, it holds that
	\[
	\sum_{k=1}^{j-1} \varepsilon_k \underset{j \rightarrow\infty}{\longrightarrow}\begin{cases} 
		\left(\gamma(j) \frac{j^{p+1}}{p+1}\right) & \text{ for }p\neq -1,\\
		\left(\gamma(j) \log j \right)& \text{ for }p = -1, \gamma(j) \neq O\left(\left(\log j\right)^{-1}\right),\\
		\eta \log\left( \log j \right)& \text{ for }p = -1, \gamma(j) =\eta \left(\log j\right)^{-1}, \eta>0.
	\end{cases}
	\]
Noting that $\log S_j = 	\sum_{k=1}^{j-1} \varepsilon_k $, it follows easily that for $p=-1$, $L_j =\log S_j /j$ converges to 0 as $j \rightarrow \infty$, while for $p\neq -1$ we have that
		\[
		L_j = \frac{\log S_j}{j}= \gamma(p)\frac{j^p}{1+p}.
		\]
The index $L_j$ diverges for $p>0$, yielding the spreading regime, and converges to 0 for $p<0$, leading to the shrinking case. When $p=0$, the asymptotic behavior of $L_j$ depends only on $\gamma$, and we can obtain shrinking ($\gamma \rightarrow 0$), stable ($\gamma \rightarrow c$), and spreading ($\gamma \rightarrow \infty$), as claimed.\\
Let us focus on the shinking regime and study the total convergence setting. observe that if $p<-1$, 
	\begin{equation*}
		\sum_{k=1}^{j-1} \varepsilon_k 	\underset{j \rightarrow \infty}{\rightarrow}E, 
	\end{equation*}
	as claimed. Moreover, if $p=-1$, and $\gamma(j)=O\left(\left(\log j\right)^{-1-\delta}\right)$, $\delta>0$, we have that 
		\begin{equation*}
		\sum_{k=1}^{j-1} \varepsilon_k 	\underset{j \rightarrow \infty}{\rightarrow} \log j \gamma(j) \rightarrow 0,
	\end{equation*}
	and thus, in both the cases, $S_j$ converges to a finite constant.
	Concerning the divergent regularly varying dilation, it is straightforward that, for $p \in (-1,0)$,
	\begin{equation*}
		\sum_{k=0}^{j}\varepsilon_k \simeq= \frac{\gamma(j)}{1+p} j^{1+p}\rightarrow \infty,
	\end{equation*} 
	as claimed. In the case $p=0$ and $\gamma(j):\lim_{j \rightarrow \infty} \gamma (j) = 0$, we find
	\[
	\sum_{k=1}^{j-1} \varepsilon_k 	\underset{j \rightarrow \infty}{\rightarrow}  j \gamma(j) \rightarrow \infty.
	\]
	For $p=-1$, and $\gamma(j) = O\left(\left(\log j\right)^{-1+\delta}\right)$,
	we note that 
		\[
	\sum_{k=1}^{j-1} \varepsilon_k 	\underset{j \rightarrow \infty}{\rightarrow}  \log j \gamma(j) \rightarrow \infty,
	\]
	while for the boundary case $p=-1$, and $\gamma(j) =\frac{\eta}{\log j}$,
	\[
\sum_{k=1}^{j-1} \varepsilon_k 	\underset{j \rightarrow \infty}{\rightarrow} \eta \log \log j \rightarrow \infty,
\]
as claimed.
\end{proof}
\begin{proof}[Corollary \ref{cor:shrinkloc}]
	Recall that if Condition \ref{cond_epsilon} holds and if  $p \in \left[\left.-1,0\right)\right.$, using
	$\varepsilon_j = \gamma( j)^{p}$ with $S_j \simeq \exp\left(\sum_{k=0}^{j-1}\varepsilon_j \right)$ yields
	\[
	\frac{S_{j+1}^2}{S_{j-1}^2} = \exp\left(2\left(\varepsilon_{j-1}+\varepsilon_{j}\right)\right).
	\]
	Thus
	\[
	\frac{S_{j+1}^2-S_{j-1}^2}{S_{j-1}} =S_{j-1}\left(\exp\left(2\left(\varepsilon_{j-1}+\varepsilon_{j}\right)\right)-1\right).
	\]
	Since $\lim_{j \rightarrow \infty} \varepsilon_j =0$, we can expand the exponential term into 
	\[
	\left(\exp\left(2\left(\varepsilon_{j-1}+\varepsilon_{j}\right)\right)-1\right)  = 2\left(\varepsilon_{j-1}+\varepsilon_{j}\right) + 2 2\left(\varepsilon_{j-1}+\varepsilon_{j}\right)^2 + \ldots,
	\]
	so that 
	Thus
	\[
	\frac{S_{j+1}^2-S_{j-1}^2}{S_{j-1}^2} =
	2S_{j-1} \left(\varepsilon_{j-1}+\varepsilon_{j}\right)+O\left(S_{j-1} \left(\varepsilon_{j-1}+\varepsilon_{j}\right)\right).
	\]
	We are now ready to use the explict definition of $\varepsilon$, recalling that $\gamma$ is slowly varying and that
	\[
	\left(j-1\right)^p = j^{p}\left(1-\frac{p}{j}+o\left(\frac{1}{j}\right)\right),
	\]
	to get
	\[
	\varepsilon_{j-1}+\varepsilon_{j} = \gamma(j)j^p \left(2-\frac{p}{j}+o\left(\frac{1}{j}\right)\right).
	\]
	Thus 
	\[
	\frac{S_{j+1}^2-S_{j-1}^2}{S_{j-1}} = 4 \gamma(j) j^p S_{j-1} \left(1 + O\left(\frac{1}{j}\right)\right).
	\]
	Analogously, we have that
	\[S_j-S_{j-1} =S_{j-1}\left(\exp\left(\varepsilon_j\right)-1\right),\]
	and a similar reasoning yields
	\[
	S_j-S_{j-1} \simeq S_{j-1}\gamma(j)j^p.
	\]
	Since asymptotically $S_{j-1} =S_j\left(1+O\left(\varepsilon_{j-1}\right)\right)$, we obtain the claimed result.
\end{proof}
\begin{proof}[Proof of Theorem \ref{thm:corr}]
	Recall that under the assumptions of Condition \ref{cond_epsilon}, $\varepsilon_j = \gamma(j)j^p$, where $\gamma$ is slowly varying and $p \in \left[\left.-1,0\right)\right.$. Since $	S_j = \exp\left(\sum_{k=1}^{j-1} \varepsilon_k\right)$, using the Taylor expansion yields
	\[
	S_j-S_{j-1}\simeq	\varepsilon_j S_{j-1} ,
	\] 
	and thus
	\[
	R_j(\beta) = \varepsilon_{j} \exp\left(\beta \sum_{k=1}^{j-2}\varepsilon_k\right).
	\]
	Then it holds that:
	\[
	R_j(\beta) >1 \Leftrightarrow \beta \sum_{k=1}^{j-2}\varepsilon_k > -p \log j - \log \gamma(j).
	\]	
	If $p\in(-1,0]$,  $\sum_{k=1}^{j-2}\varepsilon_k \simeq \frac{j^{1+p}\gamma(j)}{1+p}$ and then 
	\begin{align*}
		\lim_{j \to \infty} \frac{-p \log j - \log L(j)}{j^{1+p}\gamma(j)} = 0.
	\end{align*}
	For any $\beta > 0$, we have $R_j(\beta) > 1$ eventually. \\
	If $p=-1$, we have that $\sum_{k=1}^{j-2}\varepsilon_k \simeq \gamma(j)\log(j)$. Hence 
	\[
	\frac{\log j- \log \gamma(j)}{\gamma(j)\log j} =  \frac{1}{\gamma(j)}  -\frac{\log \gamma (j)}{\gamma(j) \log j }.
	\]
	Now, it holds that
	\begin{itemize}
		\item If $\gamma(j) \to \gamma_\infty > 0$, then $R_j(\beta) > 1$ eventually if and only if  $\beta > 1/\gamma_\infty$.
		\item If $\gamma(j) \to \infty$, then $R_j(\beta) > 1$ for any $\beta > 0$.
		\item If $\gamma(j) \to 0$, then $R_j(\beta) \to 0$ for all $\beta < 1$,
		as claimed.
	\end{itemize}
\end{proof}
	\section*{Funding}
This work was partially supported by the PRIN 2022 project GRAFIA (Geometry of Random Fields and its Applications), funded by the Italian Ministry of University and Research (MUR).


\begin{thebibliography}{25}
%
\bibitem{atki} Atkinson K., and Han W.: \emph{Spherical
	harmonics and approximations on the unit sphere: an introduction.} Springer (2012). 

\bibitem{bkmpAoS} Baldi, P., Kerkyacharian, G., Marinucci, D. and
	Picard, D.: Asymptotics for Spherical Needlets. Ann. Statist.,
37, 3, 1150--1171 (2009). 

\bibitem{bkmpAoSb} Baldi, P., Kerkyacharian, G., Marinucci, D. and
	Picard, D.: Adaptive density estimation for directional data using needlets. Ann. Statist., 37, 6A, 3362--3395, (2009). 

\bibitem{bingham} Bingham, N. H., Goldie, C. M. , and Teugels, J. L.: \emph{Regular variation}. Cambridge, (1989).

\bibitem{boj17} Bojanic, and R., Seneta, E.: A Unified Theory of Regularly Varying Sequences. Math.Z., 134, 91--106 (1973). 

\bibitem{bleem} Bleem L. E. et al: Galaxy Clusters Discovered via the Sunyaev-Zel'dovich Effect in the 2500-square-degree SPT-SZ survey. Ap.J.S., 216, 27 (2015). 

\bibitem{boudur1} Bourguin, S. and Durastanti, C.: On high-frequency limits of $U$-statistics in Besov spaces over compact manifolds. Illinois J. Math. 61(1--2): 97--125 (2017). 

\bibitem{needlib} Brauchart, J.S., Grabner,P.J., Sloan, I.H, and Womersley, R.S.: Needlets liberated. Appl. Comput. Harmon. Anal., 73, 101693 (2024). 

\bibitem{cammar} Cammarota, V., and  Marinucci, D.: On the
limiting behaviour of needlets polyspectra, Ann. inst. Henri Poincare (B) Probab. Stat., 51, 3, 1159--1189, (2015). 

\bibitem{carones} Carones, A., Migliaccio, M., Marinucci, D., and Vittorio, N.: Analysis of Needlet Internal Linear Combination performance on B-mode data from sub-orbital experiments. A\&A, 677, A147 (2023).

\bibitem{duque} Carrón Duque, K., Buzzelli, A., Fantaye, Y.,  Marinucci, D., Schwartzman, A., and Vittorio, N.: Point source detection and false discovery rate control on CMB maps. Astron. Comput., 28, 100310 (2019). 




\bibitem{dlmejs} Durastanti, C., Lan, X., and Marinucci, D.: Needlet-Whittle estimates on the unit sphere. Electron. J. Stat.,7, 597--646 (2013). 

\bibitem{dur2} Durastanti, C.: Adaptive global thresholding on the sphere. J. Multivariate. Anal., 151, 110--132  (2016). 

\bibitem{dur} Durastanti, C.: Tail behaviour of Mexican Needlets. J. Math. Anal. Appl., 447, 716--735  (2017).

\bibitem {dmt24}
Durastanti, C., Marinucci, D., and Todino A.P.: Flexible-bandwidth needlets. Bernoulli 30(1): 22-45. (2024). 

\bibitem{durtur23} Durastanti, C., and Turchi, N.: Nonparametric needlet estimation for partial derivatives of a probability density function on the d-torus. J. Nonparametric Stat., 35(4), 733--772  (2023).
%

\bibitem{gm2} Geller, D. and Mayeli, A.: Nearly Tight Frames
and Space-Frequency Analysis on Compact Manifolds, Math. Z., 263,
235--264 (2009). 

\bibitem{LeGia} Le Gia, Q.T., Sloan, I.H., Wang, Y.G. and Womersley,
	R.S.: Needlet approximation for isotropic random fields on the
sphere. J. Approx. Theory, 216, 86--116 (2017). 

\bibitem {MaPeCUP}
Marinucci, D. and Peccati, G.: \emph{Random Fields on the Sphere: Representations, Limit Theorems and Cosmological Applications}. Cambridge University Press (2011)

\bibitem{npw1} Narcowich, F.J., Petrushev, P. and Ward, J.D.: 
Localized Tight Frames on Spheres. SIAM J. Math. Anal., 38, 574--594 (2006), 

\bibitem{npw2} Narcowich, F.J., Petrushev, P. and Ward, J.D.:
Decomposition of Besov and Triebel-Lizorkin Spaces on the Sphere. J.
	Funct. Anal., 238, 2, 530--564 (2006). 

\bibitem{planck1} Planck Collaboration: Planck 2018 results. XV. CMB power spectra
and likelihood, Astron. Astrophys., Vol. 641, id.A5 (2020). 

\bibitem{shetod} Shevchenko, R., and Todino A.P.: Asymptotic behaviour of level sets of needlet random fields. Stoch. Process. their Appl..,55, 268--318 (2023). 

\bibitem{zz} Zhang, Z., Liu, Y., Li, S.Y., Li  H., and Li H.: A constrained NILC method for CMB B mode observations. JCAP, 014 (2024). 

\end{thebibliography}
\end{document}